\newcommand\C{\mathbb{C}}
\newcommand\Q{\mathbb{Q}}
\newcommand\hyp{\mathbb{H}}
\newcommand\ring{\mathcal{O}}
\newcommand\pr{\mathcal{P}}
\newcommand\pri{\mathcal{Q}}
\newcommand\Z{\mathbb{Z}}
\newcommand\F{\mathbb{F}}
\newcommand{\Fb} {\bar{\F}}
\newcommand\PSL{\mathop{\rm PSL}\nolimits}
\newcommand\SL{\mathop{\rm SL}\nolimits}
\newcommand\SLC{{\rm SL}_2(\C)}
\newtheorem{theorem}{Theorem}[section]
\newtheorem{lemma}[theorem]{Lemma}
\newtheorem{proposition}[theorem]{Proposition}
\newtheorem{corollary}[theorem]{Corollary}
\newtheorem{remark}[theorem]{Remark}
\newtheorem{conjecture}[theorem]{Conjecture}
\newcounter{nootje}
\title{Commensurability classes of $(-2,3,n)$ pretzel knot complements}
\author{Melissa L.\ Macasieb and Thomas W.\ Mattman}
\address{Department of Mathematics, The University of British Columbia,
	 Room 121, 1984 Mathematics Road,
Vancouver, B.C., Canada V6T 1Z2}
\email{macasieb@math.ubc.ca}
\address{Department of Mathematics and Statistics,
         California State University, Chico,
         Chico, CA 95929-0525, USA}
\email{TMattman@CSUChico.edu}
\subjclass{Primary 57M25}
\keywords{commensurability class, pretzel knot, trace field}
\thanks{Much of this research occured during a visit of the second author to UBC. He would like to thank Dale Rolfsen and the department for their hospitality.}
\begin{document}

\begin{abstract}
Let $K$ be a hyperbolic $(-2,3,n)$ pretzel knot and $M = S^3 \setminus K$ its complement.
For these knots, we verify a conjecture of Reid and Walsh: there are at most three knot complements in the commensurability class of $M$. Indeed, if $n \neq 7$, we show that $M$ is the unique knot complement in its class. We include examples to illustrate how our methods apply to a broad class of Montesinos knots. 
\end{abstract}

\maketitle
\section{Introduction}
Two hyperbolic 3-manifolds $M_1 = \hyp^3/ \Gamma_1$ and $M_2 = \hyp^3/ \Gamma_2$ are {\em commensurable} if they have homeomorphic finite-sheeted covering spaces.  On the level of groups, this is equivalent to $\Gamma_1$ and a conjugate of $\Gamma_2$ in $\rm{Isom}(\hyp^3)$ sharing some finite index subgroup.  The {\em commensurability class} of a hyperbolic 3-manifold M is the set of all 3-manifolds commensurable with $M$. 

Let $M = S^3 \setminus K = \hyp^3/\Gamma_K$ be a hyperbolic knot complement. A conjecture
of Reid and Walsh suggests that the commensurability class of $M$ is a strong knot
invariant:
\begin{conjecture}[\cite{RW}]\label{conjRW}%
Let $K$ be a hyperbolic knot. Then there are at most three knot complements in
the commensurability class of $S^3 \setminus K$.
\end{conjecture}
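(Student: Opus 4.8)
The plan is to verify the conjecture for the family at hand, the hyperbolic $(-2,3,n)$ pretzel knots, and in fact to establish the stronger claim from the abstract: that $M = S^3 \setminus K$ is the unique knot complement in its commensurability class whenever $n \neq 7$. The starting point is that each such $M$ is non-arithmetic (the figure-eight knot is the only arithmetic knot complement, and it is not of pretzel type), so by Margulis the commensurator $\mathrm{Comm}(\Gamma_K)$ is a lattice in $\mathrm{Isom}(\hyp^3)$ containing $\Gamma_K$ with finite index. Consequently there is a minimal orbifold $O = \hyp^3/\mathrm{Comm}(\Gamma_K)$ that is covered by every manifold in the commensurability class of $M$, and the problem reduces to counting the knot-complement covers of $O$.

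The crucial dichotomy is whether $K$ admits hidden symmetries, that is, whether $\mathrm{Comm}(\Gamma_K)$ strictly contains the normalizer $N(\Gamma_K)$. I would rule these out using the criterion of Neumann--Reid: a non-arithmetic knot complement with hidden symmetries has cusp field equal to $\Q(i)$ or $\Q(\sqrt{-3})$, corresponding to a rigid (Euclidean triangle) cusp on $O$. To apply this I first compute the trace field $\Q(\beta)$ of $K$ explicitly from the character variety and hyperbolic gluing equations of the pretzel knot, obtaining a polynomial family indexed by $n$; from the traces of the peripheral elements I then extract the cusp field and check, uniformly in $n$, that it is neither $\Q(i)$ nor $\Q(\sqrt{-3})$. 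This forces $\mathrm{Comm}(\Gamma_K) = N(\Gamma_K)$.

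With hidden symmetries excluded, $O = \hyp^3/N(\Gamma_K) = M/\mathrm{Sym}(M)$, where $\mathrm{Sym}(M)$ is the group of orientation-preserving isometries of $M$. Every commensurable knot complement is then a cover of $O$ of a restricted (cyclic) type, and by the structural results descending from Reid--Walsh the number of such knot complements is at most three and is governed by the symmetry group of $M$ together with the cusp data of $O$. Since the $(-2,3,n)$ pretzel knots are Montesinos knots with small, explicitly computable symmetry groups, I would read off from this analysis that for $n \neq 7$ no nontrivial symmetry produces a second knot-complement cover, so $M$ is unique in its class; the knot $(-2,3,7)$ is the lone exception, where the additional commensurable knot complements recover the sharp bound of three.

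The main obstacle is the second step: controlling the trace and cusp fields \emph{uniformly} across the whole family. The degree of the trace field grows with $n$, so ruling out the two exceptional cusp fields for every $n$ at once requires genuine arithmetic input about the relevant polynomial families rather than a finite check; similarly, confirming non-arithmeticity and pinning down $\mathrm{Sym}(M)$ must be done in a way that is insensitive to the size of $|n|$. I expect essentially all of the difficulty to concentrate there, with the orbifold-covering bookkeeping of the first and third steps being comparatively formal once the field computations are in hand.
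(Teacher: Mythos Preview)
Your outline tracks the paper's strategy closely: reduce to ruling out hidden symmetries via the Neumann--Reid cusp criterion, then feed the result into the Reid--Walsh machinery together with the known symmetry group. The paper executes the field step by writing down explicit recursive families of integer polynomials $p_n$, $q_n$ whose roots generate the trace field, and then arguing via reduction modulo~$2$ and modulo~$3$ (and some discriminant/ramification bookkeeping) that neither $\Q(i)$ nor $\Q(\sqrt{-3})$ can occur as a subfield; this is precisely the ``genuine arithmetic input about the relevant polynomial families'' you flag as the hard part. One small difference: the paper works with the \emph{trace} field throughout rather than the cusp field, proving the stronger statement that the trace field already excludes both quadratic fields, since the trace field is what the polynomials $p_n$ present directly.

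There is one substantive gap in your third step. You attribute the exceptional status of $n=7$ to the symmetry group, but that is not where it comes from. The Reid--Walsh criterion the paper uses (Theorem~\ref{thmRW}) has three independent hypotheses: no hidden symmetries, no lens space surgery, and symmetry group either trivial or generated by a strong inversion. The symmetry hypothesis is satisfied for \emph{all} hyperbolic $(-2,3,n)$ via the Boileau--Zimmermann and Sakuma classifications (with $n=-1$ handled separately as a $2$-bridge knot); what singles out $n=7$ is the \emph{lens space surgery} hypothesis, which fails exactly there. The two extra knot complements commensurable with the $(-2,3,7)$ complement correspond to its two lens space surgeries, not to any extra isometry. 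Your step~3 therefore needs to invoke the absence of non-trivial cyclic surgeries (quoted from \cite{M}) explicitly, rather than absorbing it into the symmetry discussion.
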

Indeed, Reid and Walsh prove that for $K$ a hyperbolic $2$--bridge knot, $M$ is the 
only knot complement in its class. This may be a wide-spread phenomenon; by combining
Proposition~4.1 of \cite{RW} with their proof of Theorem~5.3(iv), we have the following set of 
sufficient conditions for $M$ to be alone in its commensurability class.
\begin{theorem}\label{thmRW}%
Let $K$ be a hyperbolic knot in $S^3$. If $K$ admits no hidden symmetries, has no lens space surgery, and admits either no symmetries or else only a strong inversion and no other symmetries, then $S^3 
\setminus K$ is the only knot complement in its commensurability class.
\end{theorem}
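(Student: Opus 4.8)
The plan is to identify the minimal orbifold in the commensurability class and then to classify which knot complements can cover it, using the two hypotheses to eliminate everything but $M$ itself. First I would record the consequence of having no hidden symmetries. Writing $M = \hyp^3/\Gamma_K$ (and noting that a knot with hidden symmetries hypothesis rules out the sole arithmetic case, so that the commensurator is discrete by Margulis), the absence of hidden symmetries means precisely that the commensurator equals the normalizer, $C(\Gamma_K) = N(\Gamma_K)$. Hence the minimal orbifold $Q = \hyp^3/C(\Gamma_K)$, through which every manifold in the commensurability class factors, is simply the quotient $Q = M/\mathrm{Sym}(M)$ of $M$ by its group of symmetries. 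By Proposition~4.1 of \cite{RW}, any knot complement $M' = S^3 \setminus K'$ commensurable with $M$ must cover this same $Q$, so it suffices to show that the only knot complement covering $Q$ is $M$.

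Next I would split on the symmetry group, which by hypothesis is either trivial or the order-two group generated by a strong inversion $\iota$. In the first case $Q = M$ is already a manifold, so a commensurable knot complement $M'$ gives a genuine covering $M' \to M$ of knot complements. Here I would invoke the theory of coverings between knot exteriors (Gonz\'alez-Acu\~na and Whitten, as used in \cite{RW}): a nontrivial such covering forces a nontrivial cyclic, hence lens space, surgery on $K$. Since $K$ admits no lens space surgery, the covering must be trivial and $M' = M$.

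In the strong inversion case, $Q = M/\iota$ has a single cusp whose cross-section is the pillowcase $S^2(2,2,2,2)$, the quotient of the cusp torus of $M$ by the elliptic involution induced by $\iota$. Following the proof of Theorem~5.3(iv) of \cite{RW}, I would examine a putative second knot complement $M'$ covering $Q$: the requirement that its cusp torus cover the pillowcase pins down $\mathrm{Sym}(M')$ together with the relevant covering degrees, and comparison of the induced peripheral structures expresses the meridian of $K'$ in terms of the cusp data of $Q$. As in the previous case, any resulting nontrivial covering relation between knot complements is incompatible with the no-lens-space-surgery hypothesis, so again $M' = M$.

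The hard part is precisely this last case. Ruling out the extra knot complements that a pillowcase-cusped orbifold could a priori support is exactly the phenomenon that limits Reid and Walsh to ``at most three''; collapsing this count to a single complement is what requires the careful peripheral analysis together with the hypothesis that $K$ has no lens space surgery. By contrast, the reduction via no hidden symmetries and the symmetry-trivial case are comparatively routine once one knows that the commensurator coincides with the normalizer.
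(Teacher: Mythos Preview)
Your proposal aligns with the paper's treatment: the paper does not supply an independent proof of this theorem but simply records that it follows ``by combining Proposition~4.1 of \cite{RW} with their proof of Theorem~5.3(iv).'' Your sketch expands exactly this outline---using the absence of hidden symmetries to identify the minimal orbifold with $M/\mathrm{Sym}(M)$, invoking Proposition~4.1 of \cite{RW}, and then deferring the strong-inversion case to the argument of Theorem~5.3(iv) in \cite{RW}---so the approach is the same, only more fully spelled out than the one-line attribution in the paper.
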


\begin{figure}
\centerline{
\epsfig{file=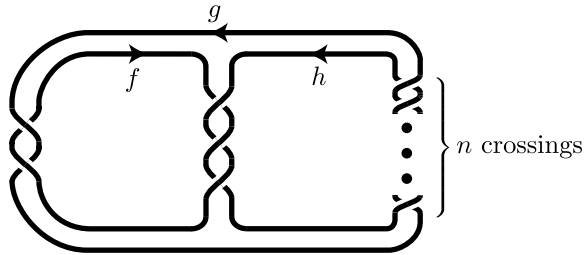}}
\caption{The $(-2,3,n)$ pretzel knot}
\label{figpretz1}
\end{figure}

The {\em $(-2, 3, n)$ pretzel knot}, $n \in \Z$, is defined by the diagram in Figure \ref{figpretz1}.  The diagram determines a knot when $n$ is odd and determines a link otherwise.  Moreover, these knots have complements that are hyperbolic precisely when $n \neq 1, 3, 5$.  In fact, both this family of knots and the family of 2-bridge knots are part of the larger family of Montesinos knots.   Our main result is the following theorem:

\begin{theorem}\label{thmgrand}
Let $K$ denote a hyperbolic $(-2,3,n)$ pretzel knot. The conjecture of Reid and Walsh holds for $K$.
Moreover, unless $n = 7$, $S^3 \setminus K$ is the only knot complement in its commensurability class.
\end{theorem}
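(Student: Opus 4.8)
The plan is to verify, for every hyperbolic $(-2,3,n)$ pretzel knot with $n \neq 7$, the three hypotheses of Theorem~\ref{thmRW}: that $K$ admits no hidden symmetries, admits no lens space surgery, and has symmetry group consisting of at most a single strong inversion. Granting these, Theorem~\ref{thmRW} immediately yields that $S^3 \setminus K$ is the only knot complement in its class, which is stronger than Conjecture~\ref{conjRW}. The value $n = 7$ must be excluded from this scheme because the $(-2,3,7)$ pretzel knot famously admits lens space surgeries; I treat it separately at the end, where the goal is only the weaker bound of three.

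First I would pin down the symmetries. Since $(-2,3,n)$ pretzel knots are Montesinos knots, their symmetry groups are governed by the Seifert structure of the double branched cover, and one can appeal to the classification of symmetries of Montesinos knots due to Boileau--Zimmermann and Bonahon--Siebenmann. For $n \neq 1,3,5$ the three tangle parameters $-2$, $3$, $n$ are pairwise distinct, so no symmetry permutes the rational tangles, and the only symmetry surviving is the evident strong inversion, giving symmetry group $\Z/2$ and nothing more. I would also record here that, by Reid's theorem that the figure-eight knot is the only arithmetic knot complement, every $(-2,3,n)$ pretzel knot complement is non-arithmetic, so its commensurator is discrete and the notion of hidden symmetries behaves as expected.

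The heart of the argument, and the main obstacle, is ruling out hidden symmetries uniformly in $n$. The tool is the criterion of Neumann and Reid: if a one-cusped hyperbolic manifold has hidden symmetries, then its cusp field is $\Q(\sqrt{-1})$ or $\Q(\sqrt{-3})$. I would therefore compute the invariant trace field of $M$ from the $\SLC$ character variety of the $(-2,3,n)$ pretzel knots, whose nonabelian part is cut out by an explicit polynomial in the trace coordinates, with the $n$ half-twists entering through Chebyshev polynomials; this presents the trace field as $\Q$ adjoined a root of an explicit family $p_n$. Since the cusp field is a subfield of the trace field, it suffices to show that the trace field contains neither $\sqrt{-1}$ nor $\sqrt{-3}$, for then no subfield of it can equal $\Q(\sqrt{-1})$ or $\Q(\sqrt{-3})$; should this fail for some sporadic $n$, one falls back on computing the cusp shape $\tau$ directly and checking that $\Q(\tau)$ is neither $\Q(\sqrt{-1})$ nor $\Q(\sqrt{-3})$. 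The real difficulty is that this must hold for the entire family at once, so I expect the delicate step to be extracting a uniform conclusion from the polynomials $p_n$ — for example through their discriminants or their reductions modulo small primes — supplemented by direct checks for the finitely many small values of $|n|$.

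Finally I would rule out lens space surgeries. A lens space surgery is a cyclic surgery, so by the Cyclic Surgery Theorem of Culler, Gordon, Luecke and Shalen it must be integral, and the candidate slopes are tightly constrained by the boundary-slope and character-variety data already assembled for these knots; carrying this out should show that $n = 7$ is the only member of the family admitting such a surgery. This completes the verification of the hypotheses of Theorem~\ref{thmRW} for $n \neq 7$. For $n = 7$, where those hypotheses fail, I would instead analyze the commensurability class directly: using the minimal orbifold in the class and counting the knot complements that arise as cyclic covers associated to its cusp, one shows there are at most three such complements, confirming Conjecture~\ref{conjRW} in this remaining case.
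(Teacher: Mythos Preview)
Your outline matches the paper's approach: reduce to Theorem~\ref{thmRW} for $n \neq 7$ by verifying its three hypotheses via the symmetry classification \cite{BZ,S}, the absence of lens space surgeries, and the Neumann--Reid criterion applied to the trace-field polynomials $p_n$; the paper carries out this last step precisely by reductions modulo $2$ and $3$, as you anticipate. For $n=7$ the paper simply cites Reid and Walsh \cite{RW}, who already showed there are exactly three knot complements in that class, and for lens space surgeries it cites \cite{M} directly rather than re-running the Cyclic Surgery Theorem machinery you sketch.

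There is one small gap in your symmetry step. The Boileau--Zimmermann and Sakuma classification applies to Montesinos knots with each tangle parameter of absolute value at least $2$; when $n=-1$ the single-crossing tangle can be absorbed and the knot is in fact a $2$-bridge knot with symmetries beyond the strong inversion, so your claim that ``the only symmetry surviving is the evident strong inversion'' fails for that value. The paper flags this exception explicitly and disposes of it by invoking Reid and Walsh's theorem that a hyperbolic $2$-bridge knot complement is alone in its commensurability class.
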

This will follow from Theorem~\ref{thmRW} in the case $n \neq 7$.  As for the $(-2,3,7)$ pretzel knot, Reid and Walsh show that there are exactly two other knot complements in the commensurability class of its complement which correspond to its two lens space surgeries.

Taking advantage of what is already known about these knots,  we can reduce 
Theorem~\ref{thmgrand} to the following theorem:
\begin{theorem}\label{thmain}%
A hyperbolic $(-2,3,n)$ pretzel knot admits no hidden symmetries.
\end{theorem}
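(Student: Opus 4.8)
The plan is to characterize hidden symmetries through the commensurator and then reduce the statement to a computation of trace fields. Recall that $M = \hyp^3/\Gamma_K$ admits hidden symmetries precisely when the normalizer $N(\Gamma_K)$ is properly contained in the commensurator $C(\Gamma_K)$, so that the minimal orbifold $\hyp^3/C(\Gamma_K)$ in the commensurability class is covered by $M$ in a way not accounted for by symmetries of $M$ itself. By Reid's theorem the figure-eight knot is the only arithmetic hyperbolic knot complement, and no $(-2,3,n)$ pretzel knot is the figure-eight; hence every such $K$ is non-arithmetic and, by Margulis, $C(\Gamma_K)$ is a lattice, so the picture above applies. The key input is the criterion of Neumann--Reid: if a non-arithmetic one-cusped hyperbolic $3$-manifold admits hidden symmetries, then it is commensurable with an orbifold whose cusp cross-section is a rigid Euclidean turnover, and consequently its cusp field is $\Q(i)$ or $\Q(\sqrt{-3})$.

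First I would reduce to the trace field. For a hyperbolic knot complement the cusp field coincides with the trace field $k = \Q(\tr \Gamma_K)$; therefore, by the Neumann--Reid criterion, if $S^3 \setminus K$ had a hidden symmetry then $k$ would equal $\Q(i)$ or $\Q(\sqrt{-3})$, both of degree two over $\Q$. It thus suffices to prove that for every hyperbolic $(-2,3,n)$ pretzel knot the trace field has degree greater than two over $\Q$; the contrapositive of the criterion then rules out hidden symmetries. (Should the trace field happen to be quadratic for some $n$, one argues instead that it is distinct from $\Q(i)$ and $\Q(\sqrt{-3})$, e.g.\ by comparing discriminants.)

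Next I would compute the trace fields explicitly as a family indexed by $n$. These pretzel knots are Montesinos knots whose groups are generated by two meridians, so I would write down the holonomy representation into $\SLC$ and extract, from the defining equations of the geometric component of the character variety, a polynomial $p_n$ whose root generates $k$. The twist region controlled by $n$ should produce a Chebyshev-type recursion in the relevant trace coordinate, giving $p_n$ as an explicit family; I would then bound $\deg p_n$ below to force the trace field to have degree exceeding two for all admissible $n$, dispatching the finitely many small values of $n$ by direct computation. The examples for other Montesinos knots would proceed along the same lines.

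The main obstacle is the uniform control of the trace field across the infinite family: deriving the defining polynomials $p_n$ in closed form in $n$ and, above all, proving the degree bound simultaneously for all $n$ rather than knot-by-knot. Establishing that the geometric factor of the character variety does not degenerate to something of degree two for sporadic $n$ is the delicate point, and I expect to need a Galois-theoretic or reduction-modulo-$p$ argument (or an explicit discriminant calculation) to pin down the degree once and for all. Verifying non-arithmeticity in a citable way and confirming the equality of cusp field and trace field for knots are comparatively routine, but they must be stated carefully since the whole reduction rests on them.
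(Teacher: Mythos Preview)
Your overall strategy is on the right track --- the Neumann--Reid criterion is the key input, and the reduction to trace fields via an explicit family of polynomials $p_n$ satisfying a recursion in $n$ is exactly what the paper does. But there is a genuine gap in your reduction step, and it propagates to the endgame.

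You assert that for a hyperbolic knot complement the cusp field \emph{coincides} with the trace field, and from this conclude that a hidden symmetry would force the trace field $k_n$ itself to \emph{equal} $\Q(i)$ or $\Q(\sqrt{-3})$, so that it suffices to show $[k_n:\Q]>2$. The equality of cusp field and trace field is not a general theorem; what is known (and what the paper uses) is only the containment: the cusp field is a \emph{subfield} of the invariant trace field. Hence the correct conclusion from Neumann--Reid is that $\Q(i)$ or $\Q(\sqrt{-3})$ sits inside $k_n$ as a subfield, and ruling out $[k_n:\Q]=2$ is not enough --- a degree-four or degree-six trace field could still contain $\Q(i)$ or $\Q(\sqrt{-3})$.

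This matters concretely because the degree approach you outline is precisely what the paper \emph{wishes} it could do but cannot justify. The polynomials $p_n$ have odd degree, so if one knew they were irreducible, $k_n$ would have odd degree over $\Q$ and admit no quadratic subfield at all. The paper states this irreducibility as an open conjecture and checks it only for $-49\le n\le -1$ by computer. For the actual proof, the paper does not attempt to control the degree of $k_n$; instead it shows directly that neither $\Q(i)$ nor $\Q(\sqrt{-3})$ is a subfield, by analysing the factorisation of $p_n$ (and an auxiliary family $q_n$) modulo $2$ and modulo $3$. Near-squarefreeness of the reductions forces $2$ (resp.\ $3$) to be essentially unramified in $k_n$, which is incompatible with containing $\Q(i)$ (resp.\ $\Q(\sqrt{-3})$); the residual ramified cases are handled by ad hoc arguments. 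So the ``reduction-modulo-$p$'' idea you list as a fallback is in fact the entire argument, and it is aimed at the subfield question, not at the degree. (Minor point: these pretzel knot groups are presented on three meridians, not two.)
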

Indeed, let $K_n$ be the $(-2,3,n)$ hyperbolic pretzel knot, i.e., $n$ is odd and $n \neq 1,3,5$.
Assuming in addition that $n \neq 7$, then $K_n$ admits no non-trivial cyclic surgeries \cite{M} and,
therefore, no lens space surgeries. The $(-2,3,1)$ knot does have symmetries other than a 
strong inversion, but it is a $2$-bridge knot and therefore covered by the work of Reid and Walsh \cite{RW}.
Assuming $K_n$ is a hyperbolic knot and $n \neq -1$,
then $K_n$ is strongly invertible and has no other symmetries \cite{BZ,S}.
Thus, Theorem~\ref{thmgrand} follows once we prove Theorem~\ref{thmain}.

The main part of this paper, then, is devoted to proving Theorem~\ref{thmain}.
Using work of Neumann and Reid~\cite{NR} this comes down to arguing that the invariant trace field 
of the $(-2,3,n)$ pretzel knot, $k_n$, has neither $\Q(i)$ nor $\Q(\sqrt{-3})$ as a subfield. We will see that it suffices to show this in the case
where $n$ is negative. Indeed, we conjecture the following:
\begin{conjecture}\label{conj}
Let $n$ be an odd, negative integer. Then the $(-2,3,n)$ and $(-2,3,6-n)$ pretzel knots have the same trace field.
\end{conjecture}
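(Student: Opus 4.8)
The plan is to attack this through an explicit description of the $\SLC$ character variety of the pretzel knot groups and to exhibit a change of variables realizing the symmetry $n \mapsto 6-n$. First I would fix the Montesinos/tangle presentation of the group of the $(-2,3,n)$ pretzel knot, in which the three rational tangles $-1/2$, $1/3$, $1/n$ appear, and parametrize representations $\rho : \Gamma_K \to \SLC$ by the traces of the meridian and of a few products of meridian images. The essential structural point is that the $n$-twist region is governed by a single element whose powers are controlled by Chebyshev polynomials $S_k$ of the second kind: writing $n = 2k+1$, the defining equations of the character variety depend on $n$ only through $S_k(z)$ and $S_{k-1}(z)$, where $z$ is the trace of that element. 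This reduces the whole infinite family to one system of polynomial equations in which $n$ enters in a structured, recursive way.

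Next I would isolate the geometric component and extract the trace field. Following Neumann and Reid, $k_n$ is generated over $\Q$ by the trace coordinates evaluated at the discrete faithful representation; concretely it is $\Q(\alpha_n)$, where $\alpha_n$ is a root of a distinguished irreducible factor $p_n(t) \in \Q[t]$ of the eliminant of the character-variety equations, singled out by the geometric solution. Practically, one eliminates all but one trace coordinate (e.g.\ via a Gr\"obner basis) to produce $p_n$, and then identifies which factor is cut out by the complete hyperbolic structure.

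The heart of the argument is to show $\Q(\alpha_n) \cong \Q(\alpha_{6-n})$. Since $n \mapsto 6-n$ is the reflection $k \mapsto 2-k$ about the tangle value $n = 3$, I would search for a corresponding involution on the trace coordinates, that is, a change of variable $t \mapsto \phi(t)$ defined over $\Q$ under which the Chebyshev data $\bigl(S_k(z), S_{k-1}(z)\bigr)$ for $n$ is carried to the data for $6-n$, so that $p_n$ is carried to $p_{6-n}$ up to a $\Q$-rational factor. Such a $\phi$ would furnish a $\Q$-isomorphism $\Q(\alpha_n) \to \Q(\alpha_{6-n})$. This is all that the application in Theorem~\ref{thmain} requires: whether $\Q(i)$ or $\Q(\sqrt{-3})$ embeds as a subfield is an isomorphism invariant of the trace field, so an isomorphism of $k_n$ with $k_{6-n}$ lets one deduce the positive-$n$ cases from the negative-$n$ cases.

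I expect the main obstacle to be precisely this last step: controlling the family uniformly in $n$. The degree of $p_n$ grows with $|n|$, so the identification cannot be checked case by case; one must prove that the Chebyshev recursion transports the geometric factor correctly, and that no spurious non-geometric factors interfere, for all odd $n$ simultaneously. This uniform control is presumably why the statement is recorded here only as a conjecture. A fallback that still suffices for the paper is to verify the field isomorphism for the finitely many values of $n$ needed to reduce every hyperbolic positive-$n$ case (those with $6-n$ negative) to a negative-$n$ case already settled in the proof of Theorem~\ref{thmain}.
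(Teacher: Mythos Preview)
This statement is a \emph{conjecture} in the paper, not a theorem: the authors do not prove it, and say explicitly that their proof of Theorem~\ref{thmain} does not depend on it. So there is no paper proof to compare against. That said, the paper already carries out concretely much of what you only sketch, and its reduction of the positive-$n$ cases to negative $n$ works differently from what you propose.

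The change of variables you are searching for is found in the paper. With the explicit recursion polynomials $p_n(v)$ of Section~\ref{secprelim}, an induction gives
\[
p_n(v)\;=\;v^{\,2-n}\,2^{(n+1)/2}\,p_{6-n}(2/v)
\]
for $n$ odd and negative, so the $\Q$-rational involution $v\mapsto 2/v$ is your $\phi$. What this identity does \emph{not} settle is which irreducible factor carries the discrete faithful representation on each side; the paper therefore reduces Conjecture~\ref{conj} to the irreducibility of $p_n$ (equivalently of $q_n$), stated as Conjecture~\ref{conjirr}. This residual irreducibility question is exactly the ``spurious non-geometric factors'' obstacle you name, but the paper has isolated it much more sharply than your outline.

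Your proposed fallback, however, does not work and misreads how the paper handles positive $n$. There are infinitely many hyperbolic $(-2,3,n)$ knots with $n\geq 7$, so one cannot reduce them to negative $n$ by checking a field isomorphism for ``finitely many values''. The paper's actual reduction does not use Conjecture~\ref{conj} at all: the arguments of Sections~\ref{secQi} and~\ref{secQ3} show that \emph{every} irreducible factor of $p_n$ (or of the product $q_n$) for $n$ negative defines a field containing neither $\Q(i)$ nor $\Q(\sqrt{-3})$. Since the substitution $v\mapsto 2/v$ matches the root set of $p_n$ with that of $p_{6-n}$ over $\Q$, the same conclusion holds for every root of $p_{6-n}$, in particular for the geometric one, regardless of which factor it lies on. That is strictly weaker than Conjecture~\ref{conj} but is exactly what Theorem~\ref{thmain} needs.
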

Using a computer algebra system, we have verified this conjecture for
$-49 \leq n \leq -1$. Note that the complements of $(-2,3,n)$ and $(-2,3,6-n)$ also share the same volume. This is stated in Week's thesis~\cite{W} (see also~\cite{BH}) and a new proof by Futer, Schleimer, and Tillman has recently been announced~\cite{F}.
This suggests that the $(-2,3,n)$ pretzel knots provide an
infinite set of examples of pairs of hyperbolic knot complements that
share the same volume and trace field and yet are not commensurable.
Our proof of Theorem~\ref{thmain} does not depend on the validity of our conjecture.

The results we have just quoted, \cite{BZ,M, S}, show that many other Montesinos knots
also have no lens space surgeries and admit, at most, a strong inversion. 
So, our methods apply to a large class of Montesinos knots.  


Our paper is organised as follows. In the next two sections we review some definitions and results that are necessary in our arguments; 
we also present evidence in support of Conjecture~\ref{conj} and prove Theorem~\ref{thmain}.
The argument comes down to showing $\Q(i)$ is not a subfield of the trace
field (Section~\ref{secQi}) and neither is $\Q(\sqrt{-3})$ (Section~\ref{secQ3}).  In Section~\ref{secMont}, we extend our results to $(p,q,r)$ pretzel knots and discuss how they apply to Montesinos knots in general.

\section{Hidden Symmetries, the Trace Field, and the Cusp Field}
In this section, we explicitly describe the relationship between hidden symmetries of a hyperbolic knot complement and its trace field.   Although some of our definitions will be phrased in terms of hyperbolic knot complements, they apply to the more general class of Kleinian groups of finite covolume.

Let $S^3 \setminus K$ be a hyperbolic knot complement and $\pi_1(S^3 \setminus K)$ its fundamental group.  Then $S^3 \setminus K$ is homeomorphic to $\hyp^3/ \Gamma_K$, for some discrete torsion free subgroup $\Gamma_K$ of $\rm{Isom}(\hyp^3) = \PSL_2(\C)$.  By the Mostow-Prasad Rigidity Theorem, $\Gamma_K$ is unique up to conjugacy if $K$ is hyperbolic and has finite volume.  Since $\pi_1(S^3 \setminus K)$ is a knot group, the isomorphism from $\pi_1(S^3 \setminus K)$ onto $\Gamma_K$ lifts to an isomorphism $\rho_0:\pi_1(S^3 \setminus K) \to \SL_2(\C)$, which is usually called the {\em discrete faithful representatation} of $\pi_1(S^3 \setminus K)$.   We will now abuse notation and identify $\pi_1(S^3 \setminus K)$ with its image $\Gamma_K \subset (\rm{P})\SL_2(C)$ via the discrete faithful representation.

The commensurator of a group $\Gamma \subset \PSL_2(\C)$ is the group 
$$C(\Gamma)=\{g \in \Gamma : |\Gamma : \Gamma \cup g^{-1}\Gamma  g| < \infty \}.$$ If $C^{+}(\Gamma)$ denotes the subgroup of orientation-preserving isometries of $C(\Gamma)$, then $K$ is said to have {\em hidden symmetries} if $C(\Gamma)$ properly contains the normalizer of $\Gamma$ in $\PSL_2(\C)$.  

Recall that the trace field of $\Gamma$, $\mbox{tr } \Gamma = \{\mbox{tr}\gamma : \gamma \in \Gamma  \}$, is a simple extension of $\Q$ and the  {\em invariant trace field} of $\Gamma$, $k \Gamma  =\{\mbox{tr}(\gamma^2) : \gamma \in \Gamma\}$, is a subfield of the trace field that is an invariant of the commensurability class of $\Gamma$.  In the case $\Gamma= \Gamma_K$ corresponds to the fundamental group of a hyperbolic knot complement, these two fields coincide.  After conjugating, if necessary, one can arrange that a peripheral subgroup of $\Gamma$ has the form
$$\langle 
\left(
\begin{array}{cc}
  1& 1    \\
  0& 1   
\end{array}
\right), \left(
\begin{array}{cc}
  1& g    \\
  0& 1   
\end{array}
\right)
\rangle.$$
The element $g$ is called the cusp parameter of $\Gamma$ and the field $\Q(g)$ is called the {\em cusp field} of $\Gamma$.  One can show that $g \in k\Gamma$ (see for example \cite[Proposition 2.7]{NR}).  Therefore, the cusp field is a subfield of the trace field. 

The following corollary of \cite[Proposition 9.1]{NR}, relates the existence of hidden symmetries of $K$ to the cusp field of $\Gamma$:

\begin{corollary}[\cite{RW}] \label{hidden} Let $K$ be a hyperbolic knot with hidden symmetries.  Then the cusp parameter of $S^3 \setminus K$ lies in $\Q(i)$ or $Q(\sqrt{-3})$.
\end{corollary}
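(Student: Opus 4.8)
The plan is to extract the statement directly from the geometric characterisation of hidden symmetries in \cite[Proposition~9.1]{NR}. Recall that the commensurator quotient $Q = \hyp^3/C^+(\Gamma)$ is the minimal orbifold in the commensurability class of $M = S^3\setminus K = \hyp^3/\Gamma$, and that $K$ has hidden symmetries precisely when $C(\Gamma)$ strictly contains the normalizer of $\Gamma$. Proposition~9.1 of \cite{NR} reinterprets this condition at the cusp: hidden symmetries are present exactly when the cusp cross-section of $Q$ is a \emph{rigid} Euclidean $2$--orbifold, that is, one whose similarity class admits no deformation. I would begin by quoting this and recalling that the only orientable rigid Euclidean $2$--orbifolds are the three turnovers $S^2(2,3,6)$, $S^2(2,4,4)$, and $S^2(3,3,3)$.

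Next I would pass to the induced covering of cusp cross-sections. Since $K$ is a knot, $M$ has a single cusp, a torus $T = \mathbb{E}^2/L$ for some rank--two lattice $L$, and the covering $M \to Q$ restricts to an orbifold covering $T \to \mathbb{E}^2/G$, where $G$ is the wallpaper group presenting the relevant turnover. This exhibits $L$ as a finite-index sublattice of the translation lattice $L'$ of $G$. The decisive observation is two-dimensional and elementary: an order--$4$ rotation in the point group of $G$ forces $L'$ to be similar to the square lattice $\Z[i]$, while an order--$3$ or order--$6$ rotation forces $L'$ to be similar to the hexagonal lattice $\Z[\omega]$, $\omega = e^{2\pi i/3}$. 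Hence the $(2,4,4)$ turnover gives $L' \sim \Z[i]$, and the $(2,3,6)$ and $(3,3,3)$ turnovers give $L' \sim \Z[\omega]$.

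Finally I would identify $g$ with the modulus of $T$ and conclude. With the peripheral subgroup normalised as in the statement, its generators are the translations by $1$ and by $g$, so $g$ is exactly the ratio of a basis of $L$, i.e.\ the modulus of $T$; in particular $g$ is a similarity invariant of $L$. Applying the similarity that carries $L'$ to $\Z[i]$ (resp.\ $\Z[\omega]$) sends $L$ to a finite-index sublattice of $\Z[i]$ (resp.\ $\Z[\omega]$) without changing $g$, and such a sublattice has a basis of Gaussian (resp.\ Eisenstein) integers. Therefore $g$ is a ratio of two such integers, giving $g \in \Q(i)$ in the $(2,4,4)$ case and $g \in \Q(\sqrt{-3})$ in the $(2,3,6)$ and $(3,3,3)$ cases, which is the assertion.

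Since the substantive geometry is already packaged in \cite[Proposition~9.1]{NR}, the only point requiring genuine care is the middle step: verifying that the orbifold cover forces an honest sublattice inclusion $L \subseteq L'$ rather than mere commensurability, and that the classification of rigid orbifolds leaves no case outside the square and hexagonal lattices. The scale-invariance of $g$ conveniently neutralises the ambiguity in the choice of similarity, so once the sublattice inclusion is secured the field conclusion follows immediately.
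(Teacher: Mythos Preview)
The paper does not supply its own proof of this corollary; it simply records it as a result from \cite{RW}, derived from \cite[Proposition~9.1]{NR}. So there is no in-paper argument to compare against, and your sketch is essentially the standard derivation one finds behind that citation: rigid cusp orbifold $\Rightarrow$ square or hexagonal translation lattice $\Rightarrow$ cusp modulus in $\Q(i)$ or $\Q(\sqrt{-3})$.

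One point deserves a sentence of care. Your opening line forms the quotient $Q=\hyp^3/C^+(\Gamma)$ as a genuine orbifold, which presupposes that $C^+(\Gamma)$ is discrete. By Margulis this holds exactly in the non-arithmetic case; for hyperbolic knots in $S^3$ the sole arithmetic exception is the figure-eight, whose cusp parameter already lies in $\Q(\sqrt{-3})$. Either note this case separately, or phrase the argument so that you only need \emph{some} finite-index overgroup of $\Gamma$ whose cusp cross-section is rigid (which \cite[Proposition~9.1]{NR} supplies without requiring a minimal orbifold). With that caveat addressed, the lattice argument you give---$L\subseteq L'$ via the peripheral inclusion, similarity of $L'$ to $\Z[i]$ or $\Z[\omega]$ forced by the rotational point group, and scale-invariance of the modulus $g$---is correct and complete.
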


\section{The Trace Field of the $(-2,3,n)$ Pretzel Knot  \label{secprelim}}

In this section we determine the trace field $k_n$ of the $(-2,3,n)$ pretzel knot, prove Theorem~\ref{thmain}, and provide 
evidence in support of Conjecture~\ref{conj}.

Let $K_n$ denote the $(-2,3,n)$ pretzel knot. As above, 
we'll assume $n$ is odd and $n \neq 1,3,5$ so that $K_n$ is a hyperbolic knot. As described in the previous section, there is a discrete faithful $(\rm{P})\SLC$-representation $\rho_0$ of the knot group $\Gamma_{n}$:
$$\Gamma_n := \pi_1(S^3 \setminus K_n) \cong 
\langle f,g, h \mid hfhg  = fhgf, gf (hg)^{(n-1)/2} = f (hg)^{(n-1)/2} h \rangle $$
where the generators $f$, $g$, and $h$ are as indicated Figure \ref{figpretz1}.

To determine the trace field, we'll need to describe the parabolic representations $\rho$ of $\Gamma_n$. The generators $f$, $g$, $h$ must be mapped to 
conjugate elements of trace two. Thus, after an appropriate conjugation in $\SLC$, 
we may assume (cf. \cite{R}),
\begin{equation}\label{eqfgh}
\rho(f) = \left(
\begin{array}{cc} 
1-uv & -v^2 \\
u^2 & 1+uv 
\end{array} \right), 
\rho(g) = \left(
\begin{array}{cc} 
1 & 0 \\
w & 1 
\end{array} \right),
\mbox{ and } \rho(h) = \left(
\begin{array}{cc} 
1 & -1 \\
0 & 1 
\end{array} \right).
\end{equation}
Taking the trace of $\rho(hfhg) - \rho(fhgf)$, we have
$(u-v-1)(u-v+1)w=0$.
If $w = 0$, the representation will not be faithful, so
we must have $u = v \pm 1$.
As either choice will lead to the same field $k_n$, we'll 
set $u = v + 1$. Then the upper left entry of $\rho(hfhg) - \rho(fhgf)$
becomes $v^2(vw-(v+1)(v+2))$. Again, $v = 0$ would mean $\rho$
is not faithful (for example, it would follow that $\rho([f,g]) = I$)
so we can set $w = (v+1)(v+2)/v$. Then, in order for
$\rho$ to be a representation of $\Gamma_n$, the second
relation implies $v$ must satisfy the 
polynomial $p_n$ defined by the following recurrences.
If $n$ is odd and negative,
\begin{eqnarray*}
p_{-1} &  = & v^3 + 2v^2 + v +1 \\
p_{-3} & = & -(v^5 + 3 v^4 + 4 v^3 + 5 v^2 + 4 v + 2) \\
p_{n} & = & - ((v^2+v+2)p_{n+2} +v^2 p_{n+4}) \mbox{ for } n < -3,
\end{eqnarray*}
while if $n$ is odd and at least $7$,
\begin{eqnarray*}
p_{7} &  = & -(v^3 + 2v^2 + 8v + 8) \\
p_{9} & = &  v^5 + 4 v^4 + 10 v^3 + 16 v^2 + 24 v + 16 \\
p_{n} & = & - ((v^2+v+2)p_{n-2} +v^2 p_{n-4}) \mbox{ for } n > 9.
\end{eqnarray*}
It follows that the discrete faithful representation $\rho_0$ corresponds to a root $\alpha_n$ of (some irreducible factor of)
$p_n$.  Moreover, the trace field $k_n = \Q(\alpha_n)$. 

In this paper we will restrict attention to $n$ negative and
use $p_n$ to argue that $k_n$ has neither $\Q(i)$ nor $\Q(\sqrt{-3})$ 
as a subfield.
However, an easy induction shows that, for $n$ odd and negative,
$p_{n}(v) = v^{2-n}2^{(n+1)/2}p_{6-n}(2/v)$.  This shows that both $k_n$ and $k_{6-n}$ correspond to factors of the same polynomial.  Therefore, 
our methods will imply that the same conclusion holds for $n$ positive:
$k_n$ has neither $\Q(i)$ nor $\Q(\sqrt{-3})$ as a subfield when $n 
\geq 7$ is odd. This is why we can restrict our attention to the case where $n$ is negative.

Before proving Theorem~\ref{thmain} we introduce
another family of polynomials under the assumption that $n$ is odd and negative:
\begin{eqnarray*}
q_{-1} &  = & w^3 -w^2 + 2 w - 7\\
q_{-3} & = & w^5 - 2 w^4 - 2 w^3 + 5 w^2 + 3 w -9\\
q_{-5} & = & w^7 -2 w^6  - 4 w^5 + 8 w^4 + 4 w^3 - 7 w^2 + 2 w -7 \\
q_{n} & = & (w^2-1)(q_{n+2} - q_{n+4}) + q_{n+6} \mbox{ for } n < -5.
\end{eqnarray*}
As the following lemma shows, these polynomials are related
to the polynomials $p_n$ defined above by letting
$w = 2 - (v+1)(v+2)/v$.   
\begin{lemma}
Let $n$ be a negative, odd integer.  Then 
$$q_n(w) = \frac{p_n(v) p_{6-n}(v)}{ v^{2-n}}$$
where $w = 2 - (v+1)(v+2)/v$. 
\end{lemma}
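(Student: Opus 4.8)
The plan is to prove the identity by induction on $-n$, exploiting that both families are governed by linear recurrences. Write $c = v^2+v+2$ and $d = v^2$, so the substitution reads $w = 2-(v+1)(v+2)/v = -c/v$, and in particular $w^2-1 = (c^2-v^2)/v^2$. Put $P_n(v) = p_n(v)\,p_{6-n}(v)/v^{2-n}$, so that the assertion is $P_n = q_n(w)$. Since the recurrence defining $q_n$ is third order (it relates $q_n,q_{n+2},q_{n+4},q_{n+6}$), the induction is seeded by the three base cases $n=-1,-3,-5$ and then propagated to all smaller $n$. First I would verify these base cases directly: using the explicit formulas for $p_{-1},p_{-3},p_{-5}$ together with $p_7,p_9$ and $p_{11}$ (the last from the positive recurrence), one computes $P_{-1},P_{-3},P_{-5}$ as rational functions of $v$ and checks that $q_{-1},q_{-3},q_{-5}$, evaluated at $w=-c/v$, reproduce them. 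This is routine but laborious polynomial bookkeeping.

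The heart of the argument is to show that $P_n$ satisfies the same recurrence as $q_n$. The key observation is that $p_n$ and $p_{6-n}$ obey \emph{one and the same} second-order recurrence. For $n<-3$ we have $p_n = -(c\,p_{n+2}+d\,p_{n+4})$ by definition; and for that same range $6-n>9$, so the positive recurrence gives
\[
p_{6-n} = -\bigl(c\,p_{6-(n+2)}+d\,p_{6-(n+4)}\bigr),
\]
since $6-n-2 = 6-(n+2)$ and $6-n-4 = 6-(n+4)$. Thus $z_n := p_n\,p_{6-n}$ is a product of two solutions of the linear recurrence with characteristic polynomial $t^2+ct+d$. A product of two such solutions satisfies the symmetric-square recurrence whose characteristic roots are $\alpha^2,\beta^2,\alpha\beta$, where $\alpha,\beta$ are the roots of $t^2+ct+d$. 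Expanding $(\mu-\alpha^2)(\mu-\beta^2)(\mu-\alpha\beta)$ and rewriting the elementary symmetric functions in terms of $c=-(\alpha+\beta)$ and $d=\alpha\beta$ yields the third-order recurrence
\[
z_n = (c^2-d)\,z_{n+2} - (dc^2-d^2)\,z_{n+4} + d^3\,z_{n+6}.
\]
This is a formal identity in $c,d$, so it is valid over $\Q(v)$.

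Finally I would unwind the normalization. Substituting $z_n = P_n\,v^{2-n}$ and dividing through by $v^{2-n}$ converts the last display into
\[
P_n = (c^2-d)v^{-2}\,P_{n+2} - (dc^2-d^2)v^{-4}\,P_{n+4} + d^3 v^{-6}\,P_{n+6}.
\]
Because $d=v^2$, the coefficients collapse: $d^3v^{-6}=1$, while both $(c^2-d)v^{-2}$ and $(dc^2-d^2)v^{-4}$ equal $(c^2-v^2)/v^2 = w^2-1$. Hence $P_n = (w^2-1)(P_{n+2}-P_{n+4})+P_{n+6}$, which is precisely the recurrence defining $q_n$. Combined with the matching base cases, downward induction on $-n$ gives $P_n = q_n(w)$ for all odd $n\le -1$.

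I expect the main obstacle to be twofold: correctly deriving the symmetric-square recurrence for the product $z_n$ (the step at which the order jumps from two to three), and carrying out the bookkeeping that shows the factor $v^{2-n}$ forces the three coefficients to collapse to exactly $w^2-1$, $-(w^2-1)$, and $1$. One should also confirm that the ranges line up: both second-order recurrences are valid for $n<-3$, which covers every $n\le -7$ where the $q_n$ recurrence is actually applied, so the inductive step is available wherever it is needed.
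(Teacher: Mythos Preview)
Your argument is correct and follows essentially the same route as the paper: verify the three base cases $n=-1,-3,-5$, then use that both $p_n$ and $p_{6-n}$ satisfy the same second-order recurrence $x_n=-(c\,x_{n+2}+d\,x_{n+4})$ to show $P_n=(w^2-1)(P_{n+2}-P_{n+4})+P_{n+6}$, which is the $q_n$ recurrence. The only difference is packaging: the paper expands $(c\,p_{n+2}+d\,p_{n+4})(c\,p_{4-n}+d\,p_{2-n})$ directly, whereas you identify the resulting third-order relation as the symmetric-square recurrence with characteristic roots $\alpha^2,\beta^2,\alpha\beta$; either way the needed inputs are the two second-order recurrences at indices $n$ and $n+2$, available precisely for $n\le-7$, matching the range where the $q_n$ recurrence applies.
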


\begin{proof}

It is easy to verify the equality for $n = -1$, $-3$, $-5$. Let $n < -7$.
Under the substitution $w = 2-(v+1)(v+2)/v$, $w^2-1$ becomes
$(v^2+2v+2)(v^2+2)/v^2$. Thus, using induction,
\begin{eqnarray*}
q_n(w)&=& q_n \left( 2-(v+1)(v+2)/v\right) \\
& = & \frac{(v^2+2v+2)(v^2+2)}{v^2} 
\left( \frac{p_{n+2} p_{6-(n+2)}}{v^{2-(n+2)}} - \frac{p_{n+4} p_{6-(n+4)}}{v^{2-(n+4)}} \right) +
\frac{p_{n+6} p_{6-(n+6)}}{v^{2-(n+6)}}\\
& = & \frac{1}{v^{2-n}} ((v^2+v+2) p_{n+2} + v^2 p_{n+4})
((v^2+v+2) p_{6-(n+2)} + v^2 p_{6-(n+4)})\\
& = & \frac{p_{n}(v) p_{6-n}(v)}{ v^{2-n}}
\end{eqnarray*}
\end{proof} 
This shows that $k_n= \Q(\alpha_n) \cong \Q(\beta_n)$, where $\alpha_n$ and $\beta_n$ are  
roots of $p_n$ and $q_n$, respectively.

In the next two sections we will prove the following two propositions using the polynomials $p_n$ and $q_n$ defined above.  As we have mentioned,
because of the connection between the $p_n$ with $n$ negative and with $n$ positive, it
will suffice to make the argument in the case that $n$ is a negative, odd integer.

\begin{proposition}\label{noimag} Let $K_n$ denote the $(-2,3,n)$ pretzel knot with trace field $k_n$ where $n$ is odd and negative.  
Then $\Q(i)$ is not a subfield of $k_n$.
\end{proposition}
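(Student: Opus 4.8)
The plan is to convert the field-theoretic assertion into a statement about squares in residue fields, and then to detect the obstruction by reducing $p_n$ (equivalently $q_n$) modulo a carefully chosen rational prime. The starting point is that $\Q(i) \subseteq k_n$ if and only if $-1$ is a square in $k_n$, i.e. $i \in k_n$. Were this the case, $-1$ would remain a square in every residue field $\ring_{k_n}/\p$. For a prime $\p$ of $k_n$ over a rational prime $\ell$ with residue degree $f$, the residue field is $\F_{\ell^f}$, in which $-1$ is a square precisely when $\ell = 2$ or $\ell^f \equiv 1 \pmod 4$. Hence it suffices to exhibit a prime $\ell \equiv 3 \pmod 4$ admitting a prime $\p \mid \ell$ of \emph{odd} residue degree $f$ in $k_n$: then $\ell^f \equiv 3 \pmod 4$, so $-1$ is a non-square in $\F_{\ell^f}$ and $\Q(i) \not\subseteq k_n$. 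Note this is exactly where the argument is tied to the prime $2$ (residues modulo $4$), in contrast to the $\Q(\sqrt{-3})$ case, which will hinge on the prime $3$ and residues modulo $3$.

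Concretely, I would fix $\ell \equiv 3 \pmod 4$ and reduce the minimal polynomial of $\alpha_n$ modulo $\ell$ (after discarding the finitely many $\ell$ that ramify or divide the relevant index, so that the Dedekind--Kummer correspondence applies). The residue degrees of the primes over $\ell$ are then the degrees of the irreducible factors of this reduction, so the target becomes: produce, for every odd negative $n$, an irreducible factor of \emph{odd} degree. Since $\deg p_n = 2-n$ is odd, a separable reduction automatically contains an odd-degree factor; the real content is to guarantee that such a factor lies in the factor of $p_n$ cutting out $k_n$, and to arrange this simultaneously for all $n$.

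The engine for the uniform statement is the recurrence itself. Fixing an irreducible $\pi(v) \in \F_\ell[v]$ of odd degree and passing to $\F_\ell[v]/(\pi) = \F_{\ell^{\deg \pi}}$, the reductions $p_{-1}, p_{-3}, p_{-5}, \dots$ evaluated at a root $\theta$ of $\pi$ satisfy the \emph{constant-coefficient} linear recurrence $p_n = -((\theta^2+\theta+2)p_{n+2} + \theta^2 p_{n+4})$. The associated transition matrix has determinant $\theta^2$, so the sequence is eventually periodic (indeed purely periodic when $\theta \neq 0$). Consequently the divisibility $\pi \mid (p_n \bmod \ell)$ is governed by a congruence on $n$ modulo the period, and only finitely many residue classes of $n$ need to be checked. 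The same analysis runs verbatim for $q_n$, which is often the more convenient model since $k_n \cong \Q(\beta_n)$ and the substitution $w = 2-(v+1)(v+2)/v$ of the Lemma is already available.

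The main obstacle is the ``which factor'' problem: an odd-degree factor of $p_n \bmod \ell$ a priori records some parabolic representation, and one must be sure it comes from the factor carrying the discrete faithful representation rather than a spurious component. I expect to resolve this either by controlling the full factorization of $p_n$ (showing the non-geometric factors define fields that are understood, or are absent), or by arguing throughout with $q_n$ and locating the odd-degree factor within its geometric component. A secondary, more routine difficulty is the uniform bookkeeping of the exceptional primes dividing a discriminant or an index, which must be excluded before the reduction faithfully records residue degrees.
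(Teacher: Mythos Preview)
Your strategy---exhibit a prime $\ell\equiv 3\pmod 4$ over which $k_n$ has a prime of odd residue degree, so that $-1$ is a nonsquare in the residue field---is a legitimate route to excluding $\Q(i)$, but the proposal has a genuine gap, and it is precisely the one you flag yourself. You never explain how to guarantee that the odd-degree irreducible factor of $p_n\bmod\ell$ lies under the \emph{geometric} factor $\Lambda_n$ rather than some other $\Q$-irreducible factor of $p_n$. Your first proposed fix, ``controlling the full factorization of $p_n$'', amounts to proving Conjecture~\ref{conjirr}, which is exactly what the paper avoids; your second, ``arguing throughout with $q_n$'', does not change the situation, since $q_n$ has the same unknown-factor issue. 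The periodicity argument you sketch only shows that, for each $n$, \emph{some} irreducible factor of $p_n$ over $\Q$ defines a field not containing $\Q(i)$, which is not what is needed. A secondary gap: for Dedekind--Kummer at a fixed $\ell$ you need $\ell\nmid[\ring_{k_n}:\Z[\alpha_n]]$, and you have no uniform control on this index as $n$ varies.

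The paper takes a different route that sidesteps the which-factor problem entirely. It works at the prime $2$ and argues through ramification rather than residue degrees. The key computation (Proposition~\ref{prop}) is that $q_n\bmod 2$ is squarefree when $3\nmid n$, and equals $(w+1)^2$ times a squarefree polynomial with no other linear factors when $3\mid n$. The point is that such a statement about $q_n\bmod 2$ constrains \emph{every} monic divisor of $q_n$ in $\Z[w]$, hence $\Lambda_n$ in particular: $\Lambda_n\bmod 2$ is either squarefree, or of the shape $(w+1)^2\prod_i g_i$ with the $g_i$ distinct of degree $\geq 2$. In the squarefree case $2\nmid d_{k_n}$, so $\Q(i)$ (discriminant $-4$) cannot embed; in the other case a short local analysis (Lemma~\ref{lem1}) shows that at most one prime above $2$ ramifies, with index $2$, while an embedding of $\Q(i)$ would force every ramification index above $2$ to be even---and Lemma~\ref{notimag} disposes of the degenerate possibility $\Lambda_n$ quadratic. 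Thus the discriminant/ramification viewpoint converts a global statement about $q_n$ into a constraint on the unknown factor $\Lambda_n$, which is exactly the leverage your residue-degree approach is missing.
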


\begin{proposition}\label{nosqrt3} Let $K_n$ denote the $(-2,3,n)$ pretzel knot with trace field $k_n$ where $n$ is odd and negative.  
Then $\Q(\sqrt{-3})$ is not a subfield of $k_n$.
\end{proposition}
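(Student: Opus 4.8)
The plan is to exploit the standard fact that a rational prime $p$ which is \emph{inert} in $\Q(\sqrt{-3})$ and unramified in a number field $L \supseteq \Q(\sqrt{-3})$ must have every prime of $L$ above $p$ of \emph{even} residue degree, since the residue field $\F_{p^2}$ of $\Q(\sqrt{-3})$ at $p$ embeds into each such residue field. Contrapositively, to establish Proposition~\ref{nosqrt3} it suffices to exhibit one prime $p$, inert in $\Q(\sqrt{-3})$ and unramified in $k_n$, together with a generator of $k_n$ whose minimal polynomial acquires an irreducible factor of odd degree modulo $p$ (when $p$ does not divide the relevant index, the factorization type mod $p$ records the residue degrees). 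By quadratic reciprocity $-3$ is a square mod $p$ exactly when $p \equiv 1 \pmod 3$, so the inert primes are those with $p \equiv 2 \pmod 3$, the smallest being $p = 2$, for which $x^2+x+1$ is indeed irreducible over $\F_2$. I would work with $p=2$ and fall back on $p = 5, 11, \dots$ only where $2$ ramifies in $k_n$.

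First I would take $\beta_n$, a root of $q_n$, as the generator of $k_n$ and reduce $q_n$ modulo $2$. Over $\F_2$ one has $w^2-1 \equiv (w+1)^2$, so the defining recurrence becomes $\bar q_n \equiv (w+1)^2(\bar q_{n+2}+\bar q_{n+4}) + \bar q_{n+6} \pmod 2$, a linear recurrence over $\F_2[w]$ with base cases $\bar q_{-1},\bar q_{-3},\bar q_{-5}$ computed by hand. The crucial structural point is that $\deg q_n = 2-n$ is \emph{odd} for every odd $n$, so $q_n \bmod 2$ always carries an irreducible factor of odd degree, a product of even-degree factors being of even degree. If $q_n$ were irreducible over $\Q$ this observation would finish the argument immediately; indeed the odd degree alone would force $[k_n:\Q]$ odd and thereby exclude \emph{every} quadratic subfield at once.

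The hard part is precisely that $q_n$, equivalently $p_n$, need not be irreducible: $k_n$ is generated by the one root $\beta_n$ attached to the discrete faithful representation, and the odd-degree factor produced modulo $2$ might a priori sit in a different Galois orbit, belonging to one of the other parabolic representations. The genuine task, then, is to show that the minimal polynomial of $\beta_n$ itself, not merely the product $q_n$, contributes an odd-degree factor modulo $2$. I would attack this in one of two ways: either (i) prove $q_n$ irreducible over $\Q$ for all odd $n<0$, for which the recurrence together with a reduction- or Eisenstein-type criterion at a well-chosen prime are the natural tools, collapsing the statement to the degree count; or, lacking a uniform irreducibility proof, (ii) use the recurrence to show that the factorization type of $q_n$ modulo $2$ is eventually periodic in $n$, determine in each residue class which factor carries $\beta_n$, and verify in finitely many base cases, by the same computer algebra used for Conjecture~\ref{conj}, that this factor has odd degree.

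Along the way I must also confirm that the chosen prime is unramified in $k_n$, i.e.\ that the factor of $q_n$ carrying $\beta_n$ is separable modulo that prime; the repeated factor $(w+1)^2$ appearing already in $\bar q_{-3}$ shows this cannot be assumed, and is exactly the situation in which one switches to another inert prime $p \equiv 2 \pmod 3$. I expect step (ii) — isolating the discrete faithful orbit uniformly in $n$ — to be the main obstacle, since controlling which irreducible factor of $p_n$ is geometric for every $n$ is the one piece not handed to us by the recurrence alone. The same machinery, run with an inert prime for $\Q(i)$ (those with $p \equiv 3 \pmod 4$, e.g.\ $p=3$) in place of $p \equiv 2 \pmod 3$, is what should drive the companion Proposition~\ref{noimag}.
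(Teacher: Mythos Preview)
Your inert-prime strategy is sound in principle, and you have correctly located the real difficulty: controlling which $\Q$-irreducible factor of $q_n$ carries the discrete faithful root $\beta_n$. But neither of your proposed resolutions closes this gap. Route~(i), the irreducibility of $q_n$ (equivalently $p_n$), is precisely Conjecture~\ref{conjirr}, left open in the paper and only verified numerically for $-49 \le n \le -1$; invoking it would make the argument conditional on an unproved statement. Route~(ii) is not carried out: you give no argument that the mod-$2$ factorization type of $q_n$ is eventually periodic in $n$, and even granting periodicity, ``determine in each residue class which factor carries $\beta_n$'' is the whole problem restated, not solved. So as it stands this is a plan with an acknowledged hole rather than a proof.

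The paper sidesteps this difficulty entirely by working at the \emph{ramified} prime of $\Q(\sqrt{-3})$ rather than an inert one. Since $\Q(\sqrt{-3})$ has discriminant $-3$, any number field containing it must have $3$ dividing its discriminant. The paper therefore reduces modulo $3$ and shows (in the main case $3 \nmid n$, Proposition~\ref{propQ3case2}) that the \emph{entire} polynomial $p_n$ is separable over $\F_3$. The key advantage is that separability of $p_n$ modulo $3$ is inherited by every $\Q$-irreducible factor, so whichever factor $\Lambda_n$ actually cuts out $k_n$ one obtains $3 \nmid \Delta(\alpha_n)$, hence $3 \nmid d_{k_n}$, hence $\Q(\sqrt{-3}) \not\subset k_n$ --- with no need to identify the geometric component. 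In the residual case $3 \mid n$ a single repeated factor $w^2$ can appear in $q_n \bmod 3$; the paper disposes of it by the same device used for $(w+1)^2$ in Section~\ref{secQi}, showing via explicit evaluations (Lemma~\ref{lem4}) that no quadratic factor of $q_n$ over $\Z$ reduces to $w^2$. Your instinct to swap the roles of $2$ and $3$ between the two propositions is thus exactly inverted relative to what makes the argument uniform: the ramification criterion, unlike your residue-degree criterion, is insensitive to which irreducible factor of $p_n$ one sits on.
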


Assuming these two results, we can prove Theorem \ref{thmain}.

\begin{proof}[Proof of Theorem \ref{thmain}]  Let $K_n$ denote the $(-2,3,n)$ pretzel knot with $n$ odd and $k_n$ its trace field.  By assumption $K_n$ is hyperbolic, so $n \neq 1, 3, 5$ and by the remarks above it suffices to consider $n<0$.  It follows from the preceding two propositions that $k_n$ contains neither $\Q(i)$ nor $\Q(\sqrt{-3})$ if $n<0$.  Therefore, by Corollary \ref{hidden}, $K_n$ has no hidden symmetries for all $n \neq 1, 3, 5$.
\end{proof}

As for Conjecture~\ref{conj}, it would follow from the following:
\begin{conjecture}\label{conjirr}
If $n$ is odd and negative,  then $p_n$ and $q_n$ are irreducible.
\end{conjecture}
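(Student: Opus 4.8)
The plan is to first collapse the two assertions into one and then attack it with $p$-adic methods. By the Lemma, for matched roots we have $\beta_n = -(\alpha_n^2+\alpha_n+2)/\alpha_n \in \Q(\alpha_n)$, while $w = -(v^2+v+2)/v$ rearranges to $v^2+(1+w)v+2=0$, so $\alpha_n$ satisfies the quadratic $v^2+(1+\beta_n)v+2 = 0$ over $\Q(\beta_n)$; hence $\Q(\beta_n)\subseteq\Q(\alpha_n)$, and since the Lemma gives $\Q(\alpha_n)\cong\Q(\beta_n)$ these fields have equal degree and therefore coincide with $k_n$. As $\alpha_n$ and $\beta_n$ each generate $k_n$, with minimal polynomials dividing $p_n$ and $q_n$ respectively and $\deg p_n = \deg q_n = 2-n$, we conclude that $p_n$ is irreducible $\iff [k_n:\Q]=2-n \iff q_n$ is irreducible. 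So it suffices to prove, say, that $p_n$ is irreducible, equivalently that the trace field of the $(-2,3,n)$ knot has degree exactly $|n|+2$.

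For the irreducibility I would use reduction modulo primes together with Newton polygons, extracting all the coefficient data directly from the recurrences. The recurrence $p_n = -((v^2+v+2)p_{n+2}+v^2p_{n+4})$ gives constant term of $2$-adic valuation $(|n|-1)/2$ and a leading coefficient equal to $\pm 1$, so the $2$-adic Newton polygon of $p_n$ runs between $(0,(|n|-1)/2)$ and $(|n|+2,0)$. The low cases are suggestive: for $p_{-3}$ the $2$-adic type is $(2)(3)$, a totally ramified quadratic factor carrying the positive-valuation roots together with an unramified cubic that is already irreducible modulo $2$. I would prove that this two-segment shape persists for all $n$ by induction on $|n|$, propagating the $2$-adic valuations of the coefficients through the recurrence. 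This would pin the $2$-adic factorization type down to a pair $(a_n,b_n)$ with both factors locally irreducible.

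It then remains to exclude the single nontrivial factorization compatible with this local type, namely a splitting $p_n = (\deg a_n)(\deg b_n)$ over $\Q$. For this I would reduce at a second prime and produce a factorization type whose admissible proper-factor degrees are disjoint from $\{a_n,b_n\}$. Having the freedom to work with either family, the $q_n$ side is attractive here because its constant terms alternate between $-7$ and $-9$, pointing to the primes $7$ and $3$; for instance the $7$-adic Newton polygon of $q_{-1}=w^3-w^2+2w-7$ has type $(1)(2)$, so $q_{-1}$ is irreducible once one checks it has no rational root. The aim would be to exhibit, for each $n$, such a complementary prime whose type is incompatible with the $2$-adic pair.

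The main obstacle is making all of this uniform in $n$. Because $\deg p_n = |n|+2$ grows without bound, the reduction data modulo a fixed prime never becomes periodic — the transfer matrix with first row $(-(v^2+v+2),\,-v^2)$ and second row $(1,\,0)$ has infinite order over $\F_2(v)$ — so one cannot reduce to finitely many cases. I expect the cleanest route is through the closed form $p_n = A(v)\lambda_+^{k} + B(v)\lambda_-^{k}$, where $\lambda_\pm$ are the roots of $x^2+(v^2+v+2)x+v^2$ with discriminant $(v+1)(v+2)(v^2-v+2)$, together with the Chebyshev-type substitution under which $\lambda_\pm/v$ satisfy $t^2-wt+1=0$. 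Analyzing the ramification forced by this discriminant and the Frobenius action on the unramified factor should control the factorization type simultaneously for all $n$. But verifying that the two-segment Newton-polygon shape and the second-prime incompatibility genuinely persist for every odd negative $n$ is the crux, and the part I expect to be hard.
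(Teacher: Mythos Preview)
The statement you are trying to prove is explicitly a \emph{conjecture} in the paper; the authors do not prove it. They report only a computer verification for $-49\le n\le -1$ and then organise the rest of the paper so that nothing depends on it: Propositions~4.1 and~5.1 are proved for an arbitrary irreducible factor $\Lambda_n$ of $p_n$ or $q_n$, precisely to avoid assuming irreducibility. So there is no ``paper's own proof'' to compare against.

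On your reduction step: the chain $\Q(\beta_n)\subseteq\Q(\alpha_n)$ together with $[\Q(\alpha_n):\Q(\beta_n)]\le 2$ is correct, but the equality $\Q(\alpha_n)\cong\Q(\beta_n)$ that you attribute to ``the Lemma'' is not a consequence of that Lemma; it is an assertion the paper makes immediately afterward, for the discrete-faithful root only, without further argument. If you want your equivalence $p_n$ irreducible $\iff q_n$ irreducible to be self-contained, you should justify $\alpha_n\in\Q(\beta_n)$ directly rather than quote it.

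As for the main body of the proposal, it is a program, not a proof, and you say so yourself in the last paragraph. The $2$-adic Newton polygon of $p_{-3}$ does have the $(2)(3)$ shape you describe, and the constant-term valuation $(|n|-1)/2$ is correct; but the heart of the matter is exactly the uniformity you flag as ``the crux'': showing that the two-segment shape persists for all $n$, and then producing, for every $n$, a second prime whose factorization type is incompatible with the $2$-adic one. You have not carried out either step. Your own observation that the transfer matrix has infinite order over $\F_2(v)$ is precisely the obstruction to reducing to finitely many residue classes, so the difficulty you identify is real and is the whole content of the conjecture. What you have written is a reasonable line of attack, consistent with the paper's remark that irreducibility would immediately give odd degree and hence no quadratic subfield, but it does not go beyond the paper's position: the conjecture is open.
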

We have verified 
Conjecture~\ref{conjirr} for $-49 \leq n \leq -1$, using a 
computer algebra system.
The conjecture has two other important consequences.
\begin{remark}
If we could prove Conjecture~\ref{conjirr}
for every $n$, we could immediately deduce that $k_n$ has no $\Q(i)$ nor $\Q(\sqrt{-3})$ subfield. Indeed,
as these polynomials have odd degree, $k_n$ would then be an odd degree extension of $\Q$ and therefore
would admit no quadratic subfield.
\end{remark}
\begin{remark}
Conjecture~\ref{conjirr} would imply that the trace field of
the $(-2,3,n)$ (and $(-2,3,6-n)$) pretzel knot has degree $2-n$. This 
agrees with an observation of Long and Reid~\cite[Theorem 3.2]{LR} that the degree of the trace fields of manifolds obtained by
Dehn filling a cusp increases with the filling coefficient.
(Hodgson made a similar observation. See also~\cite{HS}, especially Corollary~1 and the Question that follows it.)
\end{remark}

\section{$\Q(i)$ is not a subfield of $k_n$ \label{secQi}}

In this section, we prove Proposition \ref{noimag}.  Our main tool is the recursion defining the polynomials $q_n \in \Z[w]$ ($n$ negative, odd) and their reduction modulo $2^k$, for $k$ a positive integer.

\begin{proposition}\label{prop}  Let $q_n \in Z[w]$ be as described in the previous section.  Then 
$$
q_n(w) \equiv  (w+1)^e \prod_{i=2}^m g_i(w) \bmod 2.
$$
where the $g_i$ are relatively prime and $\mbox{\rm deg } g_i \geq 2$ for all $2 \leq i \leq m$ and $e = 0$ (resp. 2) if $3 \nmid n$ (resp. $3 | n$).

\end{proposition}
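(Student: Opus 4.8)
The plan is to reduce the whole recursion modulo $2$ and exploit the fact that, in characteristic $2$, $w^2-1\equiv(w+1)^2$. Writing $Q_m$ for $q_m \bmod 2 \in \F_2[w]$, the defining relation becomes
$$Q_n \equiv (w+1)^2\,(Q_{n+2}+Q_{n+4}) + Q_{n+6} \pmod 2 .$$
The presence of the factor $(w+1)^2$ drives everything. I would first pin down the exponent $e$ by induction on $n$ decreasing from the base cases $n=-1,-3,-5$, where one checks directly that $Q_{-1}=w^3+w^2+1$ and $Q_{-5}=w^7+w^2+1$ are prime to $w+1$ while $Q_{-3}=(w+1)^2(w^3+w+1)$, so $e=0,2,0$ respectively. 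Let $v$ denote the $(w+1)$-adic valuation on $\F_2[w]$. The induction splits on $n\bmod 3$: since $n+6\equiv n$ one has $v(Q_{n+6})=e(n)$, while $n+2,n+4\equiv n+2,n+1\pmod 3$, so when $3\mid n$ neither of $Q_{n+2},Q_{n+4}$ is divisible by $w+1$. If $3\nmid n$ then $v(Q_{n+6})=0$ and the other summand is divisible by $(w+1)^2$, giving $v(Q_n)=0$. If $3\mid n$ then $v(Q_{n+6})=2$, whereas $Q_{n+2}(1)=Q_{n+4}(1)=1$ forces $v(Q_{n+2}+Q_{n+4})\ge 1$, hence $v\big((w+1)^2(Q_{n+2}+Q_{n+4})\big)\ge 3$; as the two valuations differ, the minimum is attained by $Q_{n+6}$ and $v(Q_n)=2$. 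This establishes $e$ exactly.

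Next I would rule out the remaining linear factor $w$. Evaluating the recurrence at $w=0$ gives $Q_n(0)\equiv Q_{n+2}(0)+Q_{n+4}(0)+Q_{n+6}(0)$, and since the constant terms of $q_{-1},q_{-3},q_{-5}$ are the odd integers $-7,-9,-7$, a one-line induction shows $Q_n(0)=1$ for every $n$. Hence $w\nmid Q_n$, so $w+1$ is the only linear irreducible factor over $\F_2$ and it occurs to the exact power $e$ found above; consequently every other irreducible factor of $Q_n$ has degree at least $2$.

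It remains to prove that, after removing $(w+1)^e$, what is left is squarefree, i.e.\ that the $g_i$ are pairwise coprime; equivalently $\gcd(Q_n,Q_n')=(w+1)^e$ in $\F_2[w]$. This is the step I expect to be the main obstacle, since the recursion does not visibly preserve squarefreeness. The observation that makes it tractable is that $\big((w+1)^2\big)'=2(w+1)\equiv 0$, so differentiation commutes with the recursion: $Q_n'$ satisfies the same order-three relation, with seeds $Q_{-1}',Q_{-3}',Q_{-5}'$. Thus $(Q_n)$ and $(Q_n')$ are two solutions of one linear recursion over $\F_2[w]$ whose companion matrix $M$ has $\det M=1$, so $M\in \mathrm{GL}_3(\F_2[w])$. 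A repeated root $\lambda\neq 1$ of $Q_n$ is precisely a common zero of $Q_n$ and $Q_n'$; passing to the dual numbers $\Fb[\epsilon]/(\epsilon^2)$ via $w\mapsto\lambda+\epsilon$ collapses the pair $(Q_n(\lambda),Q_n'(\lambda))$ into the single value $Q_n(\lambda+\epsilon)$, and because $(\lambda+1+\epsilon)^2=(\lambda+1)^2$ in characteristic $2$ the resulting sequence obeys the same recursion with the invertible scalar matrix $M(\lambda)$. One is then reduced to showing this dual-number sequence never vanishes for $\lambda\neq 1$, which at the level of the seeds amounts to the coprimalities $\gcd(Q_{-1},Q_{-3})=\gcd(Q_{-3},Q_{-5})=\gcd(Q_{-1},Q_{-5})=1$, all verifiable by hand. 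I expect the honest completion of this last point to require the finer reduction modulo $2^k$ advertised at the start of the section, propagating the identity $\gcd(Q_n,Q_n')=(w+1)^e$ along the recursion rather than merely checking it at the three base polynomials.
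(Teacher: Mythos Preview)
Your computation of the exponent $e$ via the $(w+1)$-adic valuation and your verification that $w\nmid Q_n$ are both correct, and they match the paper's arguments closely (the paper phrases the $e$-step as tracking whether $(w+1)\mid Q_n'$, but the content is the same).

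The squarefreeness step, however, has a genuine gap. Invertibility of the companion matrix $M(\lambda)$ over the dual numbers tells you only that the \emph{vector} $\big(Q_{n}(\lambda+\epsilon),Q_{n+2}(\lambda+\epsilon),Q_{n+4}(\lambda+\epsilon)\big)$ is never the zero vector; it does not prevent a single coordinate $Q_n(\lambda+\epsilon)$ from vanishing. Pairwise coprimality of the three seeds likewise gives no control over an individual term further down the recursion, so the reduction you describe does not go through. (And the ``finer reduction modulo $2^k$'' you anticipate is not what the paper uses here; that appears later, in Lemma~\ref{lem1}, for a different purpose.)

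The paper's device is much shorter and in fact sits one step past your own observation that $Q_n'$ obeys the same recursion. Since the recursion is $\F_2[w]$-linear, so does the combination $R_n:=Q_n - wQ_n'$. One checks directly that $R_{-1}=R_{-3}=R_{-5}=(w+1)^2$, and the constant sequence $(w+1)^2$ is itself a solution of the recursion (because $(w+1)^2\cdot\big((w+1)^2+(w+1)^2\big)+(w+1)^2=(w+1)^2$ in characteristic $2$). Hence $Q_n - wQ_n' \equiv (w+1)^2$ for every $n$, and therefore $\gcd(Q_n,Q_n')$ divides $(w+1)^2$. This immediately gives that $Q_n/(w+1)^e$ is squarefree, with no appeal to dual numbers or higher $2$-adic lifts.
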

\begin{proof}  The recursion relation gives 
$$
q_n' \equiv (w+1)^2 (q_{n+2}'-q_{n+4}')+q_{n+6}' \bmod 2.
$$
By induction, one can show that $q_n - w q_n' \equiv (w+1)^2 \bmod 2$.  Therefore, $\mbox{gcd}(q_n,q_n')\equiv \mbox{gcd}(q_n',(w+1)^2) \equiv 1, (w+1),$ or $(w+1)^2 \bmod 2$.  Also, by induction,
$(w+1)$ is a factor of $q_n' \bmod 2$ if and only if $(w+1)$ is a factor of $q_{n+6}'$.   When $n$ is not a multiple of 3, $(1+w)$ is a not factor of $q_n'$ and so $\mbox{gcd}(q_n,q_n') \equiv  1 \bmod 2$.  This shows that $q_n \bmod 2$ has no repeated factors in the case $3 \nmid n$.   When $n$ is a multiple of 3, $(w+1)$ is a factor of $q_n'$ and $\mbox{gcd}(q_n,q_n')\equiv (w+1)^f \bmod 2$, where $f \leq 2$.   Suppose that $3 | n$.  Let $e$ be the greatest integer such that  $(w+1)^e$ divides $q_n \bmod 2 $.  If $e>3$, then $(w+1)^3$ divides $q_n' \bmod 2$, which implies $\mbox{gcd }(q_n,q_n')$ is divisible by $(w+1)^3 \bmod2$, which is a contradiction.  Therefore, $e=2$ or $3$.  By induction, $(w^2-1)$ divides $(q_{6k-1}-q_{6k+1})$.  Therefore, $(w+1)^3$ divides $q_{6k+3} \bmod 2$ if and only if $(w+1)^3$ divides $q_{6k-3} \bmod 2$.  Since $(w+1)^3$ does not divide $q_{-3} \bmod 2$, it is is not a factor for any $q_n \bmod 2$ where $3 | n$.
Lastly, by induction, $q_n(0)\equiv 1 \mod 2$ for all $n$.  This shows that $w$ is not a factor of $q_n \bmod 2$ which implies $\mbox{deg } g_i \geq 2$ for all $2\leq i\leq m$.

\end{proof}

Our proof will also require the following standard facts about the reduction of polynomials modulo primes and the factorization of ideals in number fields (for example, see \cite{Ko}, Sections 3.8 and 4.8). 

\begin{theorem} \label{num} Let $f(x) \in \Z[x]$ be an irreducible monic polynomial, $\alpha$ a root, and $k=\Q(\alpha)$ with ring of integers $\ring_k$.  Let $d_k$ denote the discriminant of $k$ and $\Delta(\alpha)$ the discriminant of $f$.  Let $p$ be a rational prime and $\bar{f}$ the reduction of $f$ modulo $p$.  
\begin{itemize}
\item[(i)] $\bar{f}$ decomposes into distinct irreducible factors if and only if $p$ does not divide $\Delta(\alpha)$.
\item[(ii)]  Suppose that $p$ does not divide $\Delta(\alpha) d_k^{-1}$ and $\bar{f}=\bar{f_1}^{e_1} \cdots \bar{f_m}^{e_m}$.  Then $p \ring_k = \pr_1^{e_1} \cdots \pr_m^{e_m}$.
\item[(iii)] Let $\pr_1, \cdots, \pr_m$ be the prime divisors of $p$ in $\ring_k$ with ramification indices $e_1,\cdots, e_m$, let $k_{\pr_i}$ be the completion of $k$ with respect to the valuation $v_i=e_i^{-1}v_{\pr_i}$ and let $\Q_p$ denote the completion of $\Q$ with respect to the valuation $v_p$.  Then the ramification index of $k_{\pr_i}$ over $\Q_p$ is equal to the ramification index of $\pr_i$ over $p$ in $k/\Q$. 
\end{itemize}
\end{theorem}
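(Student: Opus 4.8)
The three assertions are classical results in algebraic number theory (indeed, they are precisely the facts cited from \cite{Ko}), so the plan is to assemble standard arguments rather than develop new machinery. The organising observation is the index formula $\Delta(\alpha) = [\ring_k : \Z[\alpha]]^2\, d_k$, which shows that the hypothesis $p \nmid \Delta(\alpha) d_k^{-1}$ in (ii) is equivalent to $p \nmid [\ring_k : \Z[\alpha]]$. I would handle (i) by a discriminant computation, (ii) by the Dedekind--Kummer theorem, and (iii) by unwinding the definitions of the completions.

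For (i), I would use that the discriminant $\Delta(f)$ is, up to sign, the resultant of $f$ and $f'$, hence a fixed polynomial with integer coefficients in the coefficients of $f$. Because reduction modulo $p$ is a ring homomorphism, it commutes with this polynomial expression, so $\Delta(\bar f) \equiv \Delta(\alpha) \bmod p$; here $f$ monic guarantees $\deg \bar f = \deg f$, so no leading-coefficient degeneration occurs. Over the field $\F_p$, a monic polynomial is squarefree --- that is, factors into distinct irreducibles --- if and only if its discriminant is nonzero. Combining these, $\bar f$ has distinct irreducible factors if and only if $\Delta(\alpha) \not\equiv 0 \bmod p$, which is the claim.

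Part (ii) is the substantive step. Under the equivalent hypothesis $p \nmid [\ring_k : \Z[\alpha]]$, the finite cokernel of the inclusion $\Z[\alpha] \hookrightarrow \ring_k$ has order prime to $p$; since $p$ then acts invertibly on it, reduction modulo $p$ identifies the two rings and yields a ring isomorphism
$$\ring_k / p\ring_k \;\cong\; \Z[\alpha]/p\Z[\alpha] \;\cong\; \F_p[x]/(\bar f(x)).$$
Writing $\bar f = \bar{f_1}^{e_1} \cdots \bar{f_m}^{e_m}$ with the $\bar{f_i}$ distinct monic irreducibles and applying the Chinese Remainder Theorem gives $\ring_k/p\ring_k \cong \prod_i \F_p[x]/(\bar{f_i}^{e_i})$. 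Each factor is a local Artinian ring whose maximal ideal pulls back to a prime $\pr_i = (p, f_i(\alpha))$ above $p$ (for $f_i$ a monic lift of $\bar{f_i}$); its residue field $\F_p[x]/(\bar{f_i})$ fixes the residue degree, and the nilpotency length $e_i$ of the maximal ideal fixes the power to which $\pr_i$ occurs. Reading this decomposition back gives $p\ring_k = \pr_1^{e_1} \cdots \pr_m^{e_m}$. The one delicate point, and the one I expect to be the main obstacle, is justifying the first isomorphism from the coprimality of $p$ with the index; everything afterward is a formal consequence of the Chinese Remainder Theorem.

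Finally, (iii) is a matter of chasing the definitions of the valuations. The normalisation $v_i = e_i^{-1} v_{\pr_i}$ is chosen precisely so that $v_i$ restricts on $\Q$ to $v_p$ with $v_p(p) = 1$. Since passing to the completion does not change the value group, the value group of $v_i$ on $k_{\pr_i}$ equals that of $v_{\pr_i}$ scaled by $e_i^{-1}$, namely $e_i^{-1}\Z$, while the value group of $v_p$ on $\Q_p$ is $\Z$. The ramification index of $k_{\pr_i}$ over $\Q_p$ is by definition the index of these value groups, $[e_i^{-1}\Z : \Z] = e_i$, which is exactly the ramification index $e_i$ of $\pr_i$ over $p$ in $k/\Q$. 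Thus (iii) follows once (ii) has supplied the exponents $e_i$.
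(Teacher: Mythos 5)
Your argument is correct, but there is no proof in the paper to compare it against: the authors state Theorem~\ref{num} as a package of standard facts and simply cite Koch (Sections 3.8 and 4.8), so your proposal supplies exactly the classical proofs that citation points to. Your organising observation $\Delta(\alpha)=[\ring_k:\Z[\alpha]]^2 d_k$, which converts the hypothesis of (ii) into $p\nmid[\ring_k:\Z[\alpha]]$, is the right one, and the step you single out as delicate --- the isomorphism $\Z[\alpha]/p\Z[\alpha]\cong\ring_k/p\ring_k$ --- is handled correctly: since the cokernel of $\Z[\alpha]\hookrightarrow\ring_k$ is finite of order prime to $p$, multiplication by $p$ on it is invertible, and the snake lemma applied to $0\to\Z[\alpha]\to\ring_k\to\ring_k/\Z[\alpha]\to 0$ gives the identification; the rest of (ii) is then the Dedekind--Kummer bookkeeping via the Chinese Remainder Theorem, with the exponents read off from nilpotency lengths as you say. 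Part (iii) is likewise a correct definition-chase, resting on the fact that completion does not change the value group of a rank-one valuation, so the index $[e_i^{-1}\Z:\Z]=e_i$ computes the local ramification index. The only point worth sharpening is in (i): the degeneration risk under reduction mod $p$ lies not in $\bar f$ (monicity fixes its degree, as you note) but in $\bar f'$, whose degree drops when $p$ divides $\deg f$; this is harmless either because the formal Sylvester resultant with monic first argument is insensitive to degree drops in the second, or, more simply, because the discriminant of a monic polynomial is a universal integer polynomial in its lower coefficients and hence commutes with reduction. With that caveat made explicit, all three parts are complete and correct, and your proof is precisely the standard one the paper delegates to the literature.
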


We also require the following two lemmas in our proof.

\begin{lemma} \label{lem1}Let $g(w) \in \Z[w]$ be an irreducible monic polynomial, $\alpha$ a root, and $k=\Q(\alpha)$ with ring of integers $\ring_k$.  Let $\bar{g}$ denote the reduction of $g$ modulo 2. Suppose further that
$$\bar{g} = \bar{g_1}^2 \prod_{i=2}^{m}\bar{g_i},$$
where $\bar{g_i}$ are relatively prime with $\mbox{{\rm deg} } \bar{g_1} =1$ and $\mbox{\rm deg } \bar{g_i} \geq 2$ for $2 \leq i \leq m$.  Then either
$$
2\ring_k = \pr_1 \cdots \pr_{m+1} \quad \mbox{ or }\quad  2\ring_k = \pr_1^2 \pr_2  \cdots \pr_{m}.
$$
\end{lemma}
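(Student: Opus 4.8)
The plan is to read off the factorization of $2$ in $\ring_k$ from the factorization of $g$ over the $2$-adic field $\Q_2$ rather than from $\bar g$ directly. Since $\bar g$ has the repeated factor $\bar g_1^2$, part~(i) of Theorem~\ref{num} shows $2 \mid \Delta(\alpha)$, so part~(ii) does not apply and $2$ may divide the index $[\ring_k:\Z[\alpha]]$. Instead I will use the completion picture underlying Theorem~\ref{num}(iii): the primes of $\ring_k$ over $2$ are in bijection with the monic irreducible factors of $g$ in $\Z_2[w]$, and if $G$ is such a factor then the corresponding prime $\pr$ satisfies $e_\pr f_\pr = \deg G$ with completion $k_\pr \cong \Q_2[w]/(G)$.

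First I would lift the coprime factorization of $\bar g$ to $\Z_2$ using Hensel's lemma. Because $\bar g_1^2, \bar g_2, \dots, \bar g_m$ are pairwise coprime in $\F_2[w]$, this yields $g = G_0\,G_2 \cdots G_m$ in $\Z_2[w]$ with $\bar G_0 = \bar g_1^2$ and $\bar G_i = \bar g_i$ for $2 \le i \le m$. For each $i \ge 2$ the factor $G_i$ is irreducible over $\Q_2$, since any nontrivial monic factorization of $G_i$ would reduce modulo $2$ to a nontrivial factorization of the irreducible $\bar g_i$; as $\bar G_i = \bar g_i$ is then squarefree, the associated prime $\pr_i$ is unramified with residue degree $\deg \bar g_i \ge 2$. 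These $m-1$ factors thus contribute precisely the primes $\pr_2, \dots, \pr_m$ common to both conclusions, and the entire question reduces to the single quadratic factor $G_0$, whose reduction is $\bar g_1^2 = (w+c)^2$ for some $c \in \F_2$.

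The heart of the matter is the behaviour of $G_0$. As $g$ is irreducible over $\Q$ it is separable, so $G_0$ is separable and, being monic of degree $2$, is either a product of two distinct linear factors over $\Q_2$ or irreducible over $\Q_2$. In the first case the two factors give two distinct primes, each with $e=f=1$, producing $m+1$ primes in all and the first conclusion $2\ring_k = \pr_1 \cdots \pr_{m+1}$. In the second case $G_0$ defines a single prime with $e_\pr f_\pr = 2$, and to reach $2\ring_k = \pr_1^2 \pr_2 \cdots \pr_m$ I must show $e_\pr = 2$ and $f_\pr = 1$. This last point is the step I expect to be the main obstacle. The tempting argument is that, because $\bar G_0 = (w+c)^2$ is a power of a \emph{linear} polynomial, the residue field equals $\F_2[w]/(\bar g_1) = \F_2$ and hence $f_\pr = 1$; but this identification of the residue field holds only when $2 \nmid [\ring_k:\Z[\alpha]]$. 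In general a monic quadratic with reduction $(w+c)^2$ can define the \emph{unramified} quadratic extension of $\Q_2$ (for example $w^2 + 2w + 4$ reduces to $w^2$ yet cuts out $\Q_2(\sqrt{-3})$, which is unramified over $\Q_2$), giving a prime with $e_\pr = 1$, $f_\pr = 2$ that fits neither conclusion. The proof must therefore rule this possibility out using finer $2$-adic information about $g$ — equivalently, controlling the $2$-part of the index or the unit class of $\Delta(\alpha)$ modulo squares — so that $G_0$ cannot be unramified of degree $2$. Once that is done, the dichotomy above closes the argument, and the degree count $2 + \sum_{i=2}^m \deg \bar g_i = [k:\Q]$ confirms that no primes have been overlooked.
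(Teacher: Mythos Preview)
Your approach---lift the coprime mod~$2$ factorization to $\Z_2$ by Hensel, get unramified primes $\pr_2,\dots,\pr_m$ from the squarefree part, and then analyze the remaining quadratic factor $G_0$---is exactly the route the paper takes, and your exposition of it is in fact cleaner than the paper's. The paper's proof ends with the dichotomy ``$G_0$ factors into two distinct linear pieces over $\Q_2$, or it does not,'' and passes directly to the two displayed conclusions without further comment.

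You have correctly located a genuine gap, and the paper's proof has it too. Your example $w^2+2w+4$ is an honest counterexample to the lemma as stated: it is irreducible over $\Q$, reduces to $w^2$, and cuts out $k=\Q(\sqrt{-3})$, in which $2$ is inert; with $m=1$ this matches neither $\pr_1\pr_2$ nor $\pr_1^2$. So the third possibility---$G_0$ irreducible over $\Q_2$ defining the \emph{unramified} quadratic extension---cannot be excluded from the hypotheses given, and no amount of ``finer $2$-adic information about $g$'' will rescue the lemma in this generality. What saves the paper is that this missing case is harmless for the intended application: if $G_0$ gives an unramified prime of residue degree~$2$, then $2$ is totally unramified in $k$, so $2\nmid d_k$ and $\Q(i)\not\subset k$ anyway. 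The correct fix is therefore not to close the gap in the lemma but to add the third alternative $2\ring_k=\pr_1\cdots\pr_m$ (all unramified) to its conclusion, which is what the downstream argument in Proposition~\ref{noimag} actually needs.
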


\begin{proof} If the factorization of $g \bmod 2$ corresponds to the factorization of $2 \ring_k$, then we are done.  If not, then using Theorem \ref{num} (iii) and Hensel's lemma, we can determine the factorization of $2\ring_k$ using the $2$-adic factorization of $g$.  Since the residue class $\Z/2\Z$ is finite, the decomposition of $g$ into irreducible factors over $\Q_2$ can be accomplished in finitely many steps.  Consider the square-free part of $g \bmod 2$.  Then $g \bmod 2^l$ is square-free for all $l \geq 2$.   To see this, suppose that $g -u^2 q = 2^{l+1} h$ for some integer $l >1$ and polynomials $u,q, h \in \Z[w]$. Then $g-u^2q= 2(2^l h)$ would imply that $g \bmod 2$ is not square-free, which is a contradiction.  Also, by the same argument, the square-free part of $g \bmod 2^l$ will have no linear factors when $l>1$.  In fact, each factor of the square-free part of 
$g \bmod 2^l$ 
corresponds to exactly one factor of the square-free part of $g \bmod 2$. This shows that for each $i$, $2 \leq i \leq m$, there is a unique prime $\pr_i$ dividing 2 in $\ring_k$ corresponding to the factor $\bar{g_i}$.  Moreover, $\pr_i$ has ramification index $e_i=1$ for $2 \leq i \leq m$.  Now, there will be two primes (respectively one prime) dividing 2 corresponding to $\bar{g_1}$ in $\Q_2[w]$ if $\bar{g_1}^2 \mod 2^l$ factors (respectively does not factor) into distinct irreducible linear factors  for large enough $l$.  This finishes the proof.

\end{proof}

\begin{lemma} \label{notimag}  The polynomial $q_n$ has no quadratic factor that reduces to $(w+1)^2 \bmod 2$.
\end{lemma}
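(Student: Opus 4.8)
The plan is to suppose, for contradiction, that the minimal polynomial $g$ of $\beta_n$ (the root of $q_n$ corresponding to $\rho_0$, so that $k_n = \Q(\beta_n)$) is a quadratic $g(w) = w^2 + bw + c$ whose reduction is $(w+1)^2 \bmod 2$. Since $q_n$ is monic, Gauss's lemma lets me take $g$ monic with integer coefficients, and the congruence forces $b$ even and $c$ odd. The first step is to pin down $c$. Because $\beta_n$ is the trace-field generator coming from the discrete faithful representation, it is non-real, so $g$ has negative discriminant and in particular $c = N_{k_n/\Q}(\beta_n) > 0$. Moreover $g \mid q_n$ in $\Z[w]$ gives $c = g(0) \mid q_n(0)$, and a short induction on the recursion shows $q_n(0) \in \{-7,-9\}$ for every negative odd $n$. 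Hence $c$ is an odd positive divisor of $7$ or $9$, i.e. $c \in \{1,3,7,9\}$, and negativity of the discriminant gives $(b/2)^2 < c$. Writing $b = 2b'$, this leaves only finitely many candidates $g$: namely $w^2+1$ for $c=1$; $w^2+3$ and $w^2 \pm 2w + 3$ for $c=3$; and the analogous short lists with $|b'| < \sqrt{c}$ for $c = 7, 9$.

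The second step converts "$g \mid q_n$" into a statement about a $2$-dimensional linear recursion. For a fixed candidate $g$, the quotient $\Z[w]/(g)$ is a free $\Z$-module of rank $2$, and $g \mid q_n$ is equivalent to the remainder $r_n := q_n \bmod g$ being $0$. Using $w^2 \equiv -bw - c \pmod g$ to rewrite $w^2 - 1$, the defining recursion $q_n = (w^2-1)(q_{n+2}-q_{n+4}) + q_{n+6}$ descends to a fixed ($n$-independent) linear recursion $r_n \equiv (-bw - c - 1)(r_{n+2}-r_{n+4}) + r_{n+6} \pmod g$ on the pairs $r_n \in \Z^2$. I would compute the three seed remainders from $q_{-1}, q_{-3}, q_{-5}$ and then run this recursion.

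For the final step I would reduce the whole recursion modulo a small auxiliary prime $p$, chosen for each candidate so as not to introduce spurious vanishing. Since the state $(r_n, r_{n+2}, r_{n+4}) \bmod p$ lives in the finite set $(\F_p^2)^3$ and the recursion is invertible (one can solve for $r_{n+6}$), the sequence of states is purely periodic; checking that $0$ never occurs among the $r_n$ is then a finite computation, which I would carry out for each of the finitely many candidate $g$. The reducible possibility --- that $g$ splits into two integer linear factors $w - s$ with $s \in \{\pm1,\pm3,\pm7,\pm9\}$ --- is handled identically, replacing $r_n$ by the scalar $q_n(s)$ and checking $q_n(s) \not\equiv 0 \bmod p$ periodically.

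The main obstacle I anticipate is organizing and certifying these finitely many periodic checks uniformly: for each candidate $g$ one must exhibit a prime $p$ for which the (provably periodic) sequence of remainders demonstrably avoids $0$, while verifying that the chosen $p$ does not divide a quantity that would make the reduction lose information. Everything else --- the bound $c \in \{1,3,7,9\}$, the descent of the recursion modulo $g$, and the periodicity --- is routine once the non-realness of $\beta_n$ is used to force $c > 0$ and $(b/2)^2 < c$.
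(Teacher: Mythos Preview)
Your outline is sound but takes a considerably heavier route than the paper. Both you and the paper restrict (for the application) to a monic integer quadratic $f(w)=w^{2}+2aw+(2b+1)\equiv (w+1)^{2}\pmod 2$, and both use the induction $q_{n}(0)\in\{-7,-9\}$. The decisive extra ingredient the paper exploits, and which you do not, is a second evaluation: an easy induction gives $q_{n}(2)=1$ for every negative odd $n$, so any monic integer factor of $q_{n}$ satisfies $f(2)\mid 1$, i.e.\ $f(2)=\pm 1$. This single linear relation between $a$ and $b$, combined with the condition that $f$ define $\Q(i)$ (discriminant $4(a^{2}-2b-1)=-4d^{2}$ with $d\neq 0$), collapses everything to the lone candidate $w^{2}-4w+5$, which is then eliminated because $5\nmid q_{n}(0)$. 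No case list, no auxiliary primes, no machine check.

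Your plan instead enumerates roughly fourteen candidate quadratics from $c\mid q_{n}(0)$ and $(b/2)^{2}<c$, and proposes to kill each by running the three-term recursion in $\Z[w]/(g)$ modulo some prime $p$ until the periodic orbit is seen to miss $0$. This is legitimate in principle, but its success is not automatic: for a fixed $g$ there is no general theorem guaranteeing that such a prime exists (compare the Fibonacci phenomenon that every prime divides some $F_{m}$, so the analogous one-dimensional strategy always fails). The ``finite computation'' you defer is therefore where the actual content of your argument lives, and it would have to be carried out and displayed for each candidate before the proof is complete. The paper's evaluation $q_{n}(2)=1$ bypasses all of this with a two-line hand calculation; it is worth adding that trick to your toolkit, since the same idea (evaluate $q_{n}$ at a small integer where it takes a unit or near-unit value) reappears in the paper's treatment of the $\Q(\sqrt{-3})$ case.
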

\begin{proof}  A quadratic monic polynomial $f$ such that $f \equiv (w+1)^2 \mod 2$ has the form $f(w)=w^2+2aw+(2b+1)$ for some integers $a, b$. 
(Since $q_n$ is monic, we can assume $f$ is as well.) The polynomial $f$ has discriminant $4(a^2-2b-1)$, so if $k_n=\Q(i)$ is defined by $f$, then $a^2 -2b-1=-d^2$, for some nonzero integer $d$.  One can prove by induction that $q_n(2)=1.$  This implies that $f(2)= \pm 1.$   If $f(2)= -1= 4+4a+2b+1$, then $b=-2a-3.$  But this implies that $-d^2 = a^2+4a+5= (a+2)^2+1 \geq 1$, which is a contradiction.  If $f(2)=1$, then $b=-2a-2$ and $-d^2=(a+3)(a+1)$.  This implies $a=-2$ and $b=2$, and so $f(x)=w^2-4w+5$.  But this gives a contradiction as 5 does not divide $q_n(0)$ which is either $-7$ or $-9.$
\end{proof}

\begin{proof}[Proof of Proposition \ref{noimag}]  Let $\rho$ denote the parabolic representation of $\pi_1(S^3 \setminus K_n)$ corresponding to the faithful discrete representation conjugated to be in the form as described in Equation~(\ref{eqfgh}) of the previous section and let $\Lambda_n(w)$ be the irreducible factor of $q_n(w)$ giving the representation corresponding to the complete structure. Denote the image group by $\Gamma_n$.  Then the trace field $k_n= k\Gamma_n=\Q(\beta_n)$ corresponds to some root $\beta_n$ of the polynomial $\Lambda_n(w)$.

If $3 \nmid n$, then by Prop.\ \ref{prop}, $\Lambda_n(w)$ has distinct factors modulo 2.  Therefore, by Theorem \ref{num}, 2 does not divide the discriminant $\Delta(\beta_n)$ of $\Lambda_n(w)$.  Since the discriminant $d_{k_n}$ of $\Q(\beta_n)$ divides $\Delta(\beta_n)$, it follows that 2 does not divide the discriminant of $k_n$.  Since the discriminant of $\Q(i)$ is -4, $\Q(i)$ cannot be a subfield of $k_n$.  This follows from standard facts about the behavior of the discriminant in extensions of number fields (see \cite{Ko}, Ch. 3, for example.) 

In the case $3 | n$, there are two situations by Lemma \ref{lem1}.  Let $\ring_{k_n}$ denote the ring of integers in  $k_n$.  If there is no ramified prime in $\ring_{k_n}$ dividing 2, then the argument follows as above.  If there is such a prime, then 
$$
2\ring_{k_n} = \pr_1^2 \pr_2  \cdots \pr_{m}.
$$
is the prime factorization of 2 in $\ring_{k_n}$.  We will suppose that $\Q(i) \subset k_n$ and derive a contradiction.  Now, the ring of integers of $\Q(i)$ is $\Z[i]$; moreover, the prime factorization of 2 in $\Z[i]$ is $\pri^2$, where $\pri=(1+i)\Z[i]$.  Since $\Q(i) \subset k_n$, it follows that $2$ divides the ramification index $e_j$ of each prime ideal $\pr_j$ dividing 2 in $k_n$.  If $k_n=\Q(i)$, then  $\Lambda_n$ is quadratic, but by Lemma \ref{notimag} $\Lambda_n \not\equiv (w+1)^2 \bmod 2$.  Therefore, $\Lambda_n \bmod 2$ has at least one factor corresponding to a prime $\pr$ dividing $2$ such that $\pr^2$ does not divide 2.  This gives the desired contradiction.
\end{proof}

\section{$\Q(\sqrt{-3})$ is not a subfield of $k_n$.\label{secQ3}}

In this section, we prove Proposition \ref{nosqrt3}.  Unless 
otherwise indicated, we will use ``$\equiv$'' to denote equivalence mod $3$ throughout this
section, although this reduction may occur in different rings.  

The argument that there is no $\Q(\sqrt{-3})$ subfield breaks into two cases as it is
convenient to use $q_n$ when $3 \mid n$ and $p_n$ otherwise.  

\subsection{Case 1}

Let $n$ be negative and odd with $3 \mid n$ and let $q_n \in \Z[w]$ be the polynomials defined in Section \ref{secprelim}.
\begin{proposition}\label{sqrt3}  Let $n \equiv 0 \bmod 3$.
If $n \equiv 3$ mod 4, then $w$ does not divide $q_{n}$ mod 3. 
If $n \equiv 1$ mod 4, then $w^2$ divides $q_{n}$ mod 3 but $w^3$ does not.
\end{proposition}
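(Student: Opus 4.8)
The plan is to track $q_n$ modulo $3$ only up to order $w^3$, i.e.\ to compute its image $\bar q_n$ in the finite ring $R = \F_3[w]/(w^3)$. In these terms the three assertions are simply statements about which power of $w$ divides $\bar q_n$: $w \mid q_n \bmod 3$ means $\bar q_n$ has zero constant term, $w^2 \mid q_n \bmod 3$ means its constant and linear coefficients both vanish, and $w^3 \mid q_n \bmod 3$ means $\bar q_n = 0$ in $R$. Since in $R$ we have $w^2 - 1 = 2 + w^2$, the defining recursion descends to
\[
\bar q_n = (2 + w^2)(\bar q_{n+2} - \bar q_{n+4}) + \bar q_{n+6},
\]
a third-order linear recurrence over $R$ whose initial data $\bar q_{-1} = 2 + 2w + 2w^2$, $\bar q_{-3} = 2w^2$, $\bar q_{-5} = 2 + 2w + 2w^2$ are read off from the three seed polynomials reduced modulo $(3, w^3)$.

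The key step is to prove that $(\bar q_n)$ is periodic in $n$ with period $12$. Because $R$ is finite and the recurrence is invertible (its leading coefficient is a unit, so one may solve for $\bar q_{n+6}$ and run the sequence in both directions), a period exists; to identify one I set $E_n = \bar q_n - \bar q_{n+12}$. As $\bar q_n$ and $\bar q_{n+12}$ satisfy the same constant-coefficient recurrence, $E_n$ satisfies the homogeneous recurrence $E_n = (2 + w^2)(E_{n+2} - E_{n+4}) + E_{n+6}$. Computing $\bar q_{-1}, \bar q_{-3}, \dots, \bar q_{-17}$ directly shows $\bar q_{-13} = \bar q_{-1}$, $\bar q_{-15} = \bar q_{-3}$, and $\bar q_{-17} = \bar q_{-5}$, that is $E_{-1} = E_{-3} = E_{-5} = 0$; a downward induction using the recurrence then forces $E_n = 0$ for all $n \leq -1$, so $\bar q_n = \bar q_{n+12}$ throughout.

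It remains to identify the two relevant residue classes and read off $\bar q_n$. Since $n$ is odd with $3 \mid n$, by the Chinese Remainder Theorem the hypothesis $n \equiv 3 \bmod 4$ is equivalent to $n \equiv 3 \bmod 12$, and $n \equiv 1 \bmod 4$ is equivalent to $n \equiv 9 \bmod 12$. The period-$12$ computation gives $\bar q_n = 2 + 2w + w^2$ when $n \equiv 3 \bmod 12$ and $\bar q_n = 2 w^2$ when $n \equiv 9 \bmod 12$. In the first case the constant term is the nonzero element $2 \in \F_3$, so $w \nmid q_n \bmod 3$; in the second case the constant and linear coefficients vanish but the coefficient of $w^2$ is $2 \neq 0$, so $w^2 \mid q_n \bmod 3$ while $w^3 \nmid q_n \bmod 3$. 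This proves both statements.

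The arithmetic is entirely routine once the recurrence has been transported into $R$; the only genuine care is needed in the periodicity step, where I must carry the base computation far enough (through $\bar q_{-17}$) to produce the three vanishing values $E_{-1}, E_{-3}, E_{-5}$ that launch the induction, and in correctly matching the parity hypothesis modulo $4$ with the divisibility hypothesis modulo $3$ so as to land on the residues $3$ and $9$ modulo $12$. I expect this bookkeeping, rather than any conceptual difficulty, to be the main obstacle.
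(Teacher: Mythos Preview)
Your argument is correct and takes a genuinely different route from the paper. You transport the recursion into the finite ring $R=\F_3[w]/(w^3)$, verify enough terms to establish period $12$, and then read off the proposition from the two relevant residues $n\equiv 3,9\bmod 12$. The paper instead handles the three claims by three separate inductions: it tracks the constant term of $q_n$ (which is $-7$ or $-9$ according to $n\bmod 4$, settling the $n\equiv 3$ case), proves $w^2\mid q_n\bmod 3$ for all $n\equiv 1\bmod 4$ by a divisibility argument on the recurrence, and rules out $w^3$ by showing the $w^2$-coefficient cycles through the nonzero values $2,2,2,1,1,1,\ldots$ modulo $3$. The paper's argument thus proves slightly more (the hypothesis $3\mid n$ is never used) and requires fewer iterated evaluations; your unified approach trades that extra generality for economy, since one periodicity computation disposes of all three claims at once.

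One small indexing slip: with your definition $E_n=\bar q_n-\bar q_{n+12}$, the equalities $\bar q_{-13}=\bar q_{-1}$, $\bar q_{-15}=\bar q_{-3}$, $\bar q_{-17}=\bar q_{-5}$ give $E_{-13}=E_{-15}=E_{-17}=0$, not $E_{-1}=E_{-3}=E_{-5}=0$ (the latter would involve undefined terms $\bar q_{11},\bar q_9,\bar q_7$). The recurrence for $E_n$ is valid only for $n\le -19$, since both $\bar q_n$ and $\bar q_{n+12}$ must satisfy the defining recursion; the downward induction then runs from $n=-19$ and yields $\bar q_n=\bar q_{n+12}$ for all $n\le -13$, which together with your explicit list $\bar q_{-1},\ldots,\bar q_{-11}$ is exactly what you need. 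The substance is unaffected.
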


\begin{proof}
By induction, the constant term of $q_n$ is $-7$ if $n \equiv 3$ mod 4 and
$-9$ if $n \equiv 1$ mod 4. So, if $n \equiv 3$ mod 4, $w$ does not divide $q_n$ mod 3.

To see that $w^2$ divides $q_n$ for $n \equiv 1$ mod 4, note that, by induction, 
for such an $n$, $w^2$ divides
$$q_{n+2} - q_{n+6} = (w^2-1) q_{n+4} + q_{n+8} + w^2 q_{n+6}$$
since $n+4$ and $n+8$ are also $1$ mod $4$. It follows that $w^2$ divides
$$q_n = w^2 q_{n+2} - (w^2-1)q_{n+4} - (q_{n+2} - q_{n+6}).$$

Finally, we can argue that $w^3$ does not divide $q_n$ by noting that the
$w^2$ coefficient of $q_n$ is never $0$ mod $3$. Let $q_{n,k}$ denote the coefficient of $w^k$ in $q_n$. Then,
$$q_{n,2} = -q_{n+2,2} + q_{n+4,2} + q_{n+6,2} + q_{n+2,0} - q_{n+4,0}.
$$
We have already mentioned that the constant coefficients $q_{n+2,0}$ and
$q_{n+4,0}$ are either $-7$ or $-9$. So, we have a simple recursion for
the $w^2$ coefficients which shows  that they cycle through the values 
$2,2,2,1,1,1,2,2,2,1,1,1,\ldots$ modulo 3.
\end{proof}

Using the substitution $w = x+x^{-1}$, we can derive a closed form for a sequence of Laurent polynmomials related to $q_n$.  Letting $r_n (x)= q_n(x+x^{-1})$ and using the recursion relation for $q_n$, one can establish that $ r_{n}(x) =f_{(3-n)/2}(x)$ where 
$$f_k = x{[}x^{2k} + x^{-2k} + (x^3 + 4x^2 - 8 + 4x^{-2} + x^{-3}){]}/(x + 1)^2.$$
(We thank Frank Calegari, Ronald van Luijk, and Don Zagier for help in determining this closed form.)
In the ring $\Z[w][x]/(x^2-wx+1)\cong \Z[x,x^{-1}]$, we have $w = x+x^{-1}$.  (Note that $\mbox{deg }(x^{\mbox{\small deg } q_n} r_n(x)) = 2 \mbox{ deg } q_n$, i.e., the polynomial $x^{\mbox{deg } q_n} r_n \in \Z[x]$, or equivalently the numerator of $f_{(3-n)/2}$, indeed defines a quadratic extension of $k_n.$) However, it will be more convenient to work with the Laurent polynomials $f_k$ in the ring $\Z[x,x^{-1},(x+1)^{-1}] \supset \Z[w]$.  Since 0 and 1 are not roots of $q_n$, it suffices to look at the reduction of $q_n$ modulo 3 in this ring.  

\begin{lemma} \label{lem2}
If $3|n$, then 
$$q_{n+2} - q_{n+4} + (w^2-1)(q'_{n+2} - q'_{n+4}) \equiv 0.$$
\end{lemma}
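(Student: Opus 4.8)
The plan is to prove the identity by passing to the closed form just established. Writing $S_n = q_{n+2}-q_{n+4}$, the left-hand side of the lemma is $\Phi_n := S_n + (w^2-1)S_n'$, where $S_n' = dS_n/dw$, and we must show $\Phi_n \equiv 0$ modulo $3$ whenever $3\mid n$ (the requirement that $q_{n+2},q_{n+4}$ be defined forces $n \le -9$ here). I would work throughout in $\Z[x,x^{-1},(x+1)^{-1}]$ under the substitution $w = x+x^{-1}$, where each $q_m$ becomes the Laurent polynomial $r_m = f_{(3-m)/2}$.

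First I would record the image of $S_n$. Setting $k = (1-n)/2$, the relation $r_m = f_{(3-m)/2}$ gives $S_n \mapsto \sigma_n := f_k - f_{k-1}$, and a short cancellation in the formula for $f_k$ collapses this to
$$\sigma_n = \frac{x(x-1)}{x+1}\left(x^{2k-2}-x^{-2k}\right).$$
Here $(x+1)$ genuinely divides the numerator, so $\sigma_n$ is an honest Laurent polynomial. Next I would convert the $w$-derivative to an $x$-derivative: since $dw/dx = (x^2-1)/x^2$, the chain rule gives $S_n'(w) = \frac{x^2}{x^2-1}\,\sigma_n'(x) \in \Z[x,x^{-1}]$. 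Using $w^2-1 = (x^4+x^2+1)/x^2$ together with the exact division $x^4+x^2+1 = (x^2-1)(x^2+2)+3$, one obtains $(w^2-1)S_n'(w) = (x^2+2)\sigma_n' + 3x^{-2}S_n'(w)$, whence, modulo $3$,
$$\Phi_n \equiv \sigma_n + (x^2-1)\sigma_n' .$$

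It then remains to evaluate the right-hand side. Differentiating $\sigma_n$ and clearing the $(x+1)$-denominators with the mod-$3$ factorisation $x^2+x+1\equiv (x-1)^2$, a direct computation yields
$$\sigma_n + (x^2-1)\sigma_n' \equiv (x-1)^2(2k-1)\left(x^{2k-2}+x^{-2k}\right).$$
The punchline is that $2k-1 = -n$, so the hypothesis $3\mid n$ forces the scalar factor $2k-1$ to vanish modulo $3$, giving $\Phi_n \equiv 0$ as claimed. I expect the main obstacle to be bookkeeping rather than ideas: one must verify carefully that each apparent denominator ($x^2-1$ and $x+1$) either divides exactly or contributes only a multiple of $3$, so that every reduction is legitimate inside the Laurent ring. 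The cleanest way to organise the last computation is to split $\sigma_n = A\cdot B$ with $A = x(x-1)/(x+1)$ and $B = x^{2k-2}-x^{-2k}$, establish $A + (x^2-1)A' \equiv (x-1)^2$ and the exact identity $(x^2-1)AB' = x(x-1)^2 B'$ separately, and then add. A purely inductive alternative based on the recursion for $q_n$ is possible, but it seems to require an auxiliary identity of the same flavour, so the closed-form route is preferable.
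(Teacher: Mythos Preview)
Your argument is correct and follows essentially the same route as the paper: pass to the Laurent ring via $w=x+x^{-1}$, invoke the closed form $r_m=f_{(3-m)/2}$, and reduce modulo~$3$. The paper carries the full formulas for $f_k$ and $df_k/dx$ through to the end and cancels everything in one large display, whereas you first collapse the difference $f_k-f_{k-1}$ to the compact expression $\sigma_n=\dfrac{x(x-1)}{x+1}\bigl(x^{2k-2}-x^{-2k}\bigr)$ and then differentiate; this is tidier and has the pleasant side effect of making the hypothesis visible, since the scalar $2k-1=-n$ emerges as an explicit factor. Your handling of the denominators is also sound: writing $\dfrac{x^4+x^2+1}{x^2-1}\,\sigma_n'=(x^2+2)\sigma_n'+3x^{-2}S_n'(x+x^{-1})$ shows that the apparent $(x^2-1)^{-1}$ contributes only a genuine multiple of~$3$ in $\Z[x,x^{-1}]$, so the reduction $\Phi_n\equiv\sigma_n+(x^2-1)\sigma_n'$ is legitimate.
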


\begin{proof}
Working modulo 3, we have that
$$
f_{k}  \equiv x(x+1)^{-2}[\left( x^{k}-x^{-k} \right)^2 +x^{3}+x^{2} + x +x^{-2} + x^{-3}]$$
and
$$
\frac{df_k}{dx}  \equiv (-x+1)(x+1)^{-3}\left( x^{k}-x^{-k} \right)^2-k(x+1)^2(x^{2k}-x^{-2k})
-x-1+x^{-2}+x^{-3}.$$
Moreover,
$$
 r'_{n} = \frac{dr_n}{dw}  = \frac{dr_n}{dx} \frac{ dx}{dw}= \frac{x^2}{x^2-1} \frac{df_{(n+3)/2}}{dx}.
$$
This gives
$$
(x^{2}+1+x^{-2}) r'_{n} \equiv  (x^2+1) \frac{df_{(n+3)/2}}{dx}.
$$

So, if $3|n$, after applying the substitution $w = x+1/x$ and using the above formulas, we get
\begin{eqnarray*}
&&q_{n+2} - q_{n+4} + (w^2-1)(q'_{n+2} - q'_{n+4})  \\
&=& r_{n+2}-r_{n+4} + (x^{2}+1+x^{-2})(r'_{n+2} - r'_{n+4}) \\
& \equiv & f_{(3-(n+2))/2} - f_{(3-(n+4))/2} \\
&& \mbox{ }
+ (x^2+1) {[} df_{(3-(n+2))/2}/dx - df_{(3-(n+4))/2}/dx {]} \\
& \equiv & (x+1)^{-2} {[}(x^{-(n+1)}+x^{n+1} -x^{1-n}-x^{n-1})(x^2+1) \\
&& \mbox{ } + (x^2-1)(x^{1-n}-x^{n-1}-x^{n+1}+x^{-(n+1)}){]} \equiv 0
\end{eqnarray*} 
as required.
\end{proof}

\begin{lemma} \label{lem3} When $3|n$, 
$$q_n - (1-w)q'_n \equiv -w$$
\end{lemma}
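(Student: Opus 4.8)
The plan is to prove the identity by induction on $n$, descending in steps of $6$ so as to remain within the odd multiples of $3$. The base case is $n = -3$: using the explicit polynomial $q_{-3} = w^5 - 2w^4 - 2w^3 + 5w^2 + 3w - 9$ one computes $q_{-3} - (1-w)q_{-3}'$ directly and checks that, modulo $3$, it equals $-w$, the point being that every term of degree $\geq 2$ acquires a coefficient divisible by $3$. Since $3 \mid n$ iff $3 \mid (n+6)$, every admissible $n$ (odd, negative, divisible by $3$) is reached from $n=-3$ by iterating the defining recursion, so this single base case suffices.

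For the inductive step I would assume the identity at $n+6$ and apply the recursion $q_n = (w^2-1)(q_{n+2} - q_{n+4}) + q_{n+6}$. Writing $D = q_{n+2} - q_{n+4}$ and $D' = q_{n+2}' - q_{n+4}'$, differentiation gives $q_n' = 2wD + (w^2-1)D' + q_{n+6}'$. Substituting into $q_n - (1-w)q_n'$ and isolating the $q_{n+6}$ contribution yields
$$q_n - (1-w)q_n' = \bigl[q_{n+6} - (1-w)q_{n+6}'\bigr] + (w^2-1)D - 2w(1-w)D - (1-w)(w^2-1)D'.$$
The bracketed term is $\equiv -w$ by the induction hypothesis, so it remains to show the last three terms vanish modulo $3$.

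This is the crux of the argument. Factoring $D$ out of the first two remaining terms produces the coefficient $(w^2-1) - 2w(1-w) = 3w^2 - 2w - 1$, which reduces modulo $3$ to $w-1$; and the third term equals $(w-1)(w^2-1)D'$ since $-(1-w) = w-1$. Hence the surviving remainder collapses to
$$(w-1)\bigl[\,D + (w^2-1)D'\,\bigr] = (w-1)\bigl[\,(q_{n+2} - q_{n+4}) + (w^2-1)(q_{n+2}' - q_{n+4}')\,\bigr],$$
and the bracketed factor is exactly the expression shown to be $\equiv 0$ in Lemma~\ref{lem2}, which applies because $3 \mid n$. Therefore the remainder vanishes mod $3$ and $q_n - (1-w)q_n' \equiv -w$, closing the induction.

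I expect the main obstacle to lie not in the setup but in the bookkeeping of this collapse: one must observe that reduction mod $3$ annihilates the $3w^2$ produced by the terms $2wD$ and $w^2 D'$, so that the entire surviving remainder factors through $(w-1)$ and funnels precisely into Lemma~\ref{lem2}. In other words, Lemma~\ref{lem2} supplies exactly the cancellation needed, and the choice of step size $6$ (rather than $2$) is what keeps both the induction hypothesis and Lemma~\ref{lem2} simultaneously applicable at each stage. Without the mod-$3$ reduction the identity would fail, so tracking which coefficients survive the reduction is the delicate part.
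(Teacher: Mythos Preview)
Your argument is correct and follows essentially the same route as the paper: induct in steps of $6$ via the recursion, then use Lemma~\ref{lem2} to cancel the cross terms so that $q_n-(1-w)q_n'\equiv q_{n+6}-(1-w)q_{n+6}'$. The only cosmetic difference is that the paper reduces $2w\equiv -w$ before expanding, whereas you expand first and then reduce $3w^2-2w-1\equiv w-1$; the algebra is otherwise identical.
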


\begin{proof}
Since $q_n  =  (w^2-1)(q_{n+2} - q_{n+4}) + q_{n+6}$, 
then $$q'_n \equiv -w(q_{n+2}-q_{n+4}) + (w^2-1)(q'_{n+2} - q'_{n+4}) + q'_{n+6}.$$
Using the previous lemma we have $q_{n} - (1-w)q'_n  \equiv q_{n+6} - (1-w)q'_{n+6}$ and the proof follows by induction. 
\end{proof}

\begin{lemma}\label{lem4}
Let $3|n$ and $n \equiv 1$ mod 4. There is no quadratic factor of $q_n$ that reduces to $w^2$ mod 3.
\end{lemma}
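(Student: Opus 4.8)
The plan is to mirror the argument of Lemma~\ref{notimag}, replacing the prime $2$ by $3$ and $\Q(i)$ by $\Q(\sqrt{-3})$. Since $q_n$ is monic we may take the putative factor $f$ to be monic as well, and $f \equiv w^2 \bmod 3$ forces $f(w) = w^2 + 3aw + 3b$ for integers $a,b$. In the situation where this lemma is applied (Case~1 of Proposition~\ref{nosqrt3}), $f$ is the factor $\Lambda_n$ cutting out the complete structure, so its root is complex and $f$ defines $k_n = \Q(\sqrt{-3})$; the discriminant $\Delta(f) = 9a^2 - 12b$ must then equal $-3d^2$ for some integer $d$, i.e. $3a^2 - 4b = -d^2$.

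The first step is to pin down $f(2)$. An easy induction on the recursion for $q_n$ (identical to the one quoted in Lemma~\ref{notimag}) gives $q_n(2) = 1$ for every negative odd $n$; since $f \mid q_n$ in $\Z[w]$ we get $f(2) \mid 1$, so $f(2) = \pm 1$. But $f(2) = 4 + 6a + 3b \equiv 1 \bmod 3$, which excludes $f(2) = -1$ and leaves $f(2) = 1$, i.e. $b = -1 - 2a$. Here the hypothesis $n \equiv 1 \bmod 4$ enters: by Proposition~\ref{sqrt3} it guarantees $q_n(0) = -9$, so that $f(0) = 3b$ can divide $q_n(0)$; for $n \equiv 3 \bmod 4$ one has $q_n(0) = -7$ and no factor reducing to $w^2$ can occur at all. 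Substituting $b = -1 - 2a$ into the discriminant relation $3a^2 - 4b = -d^2$ turns it into $(3a+2)(a+2) = -d^2 \le 0$, whose only nondegenerate integer solution is $a = -1$ (the value $a = -2$ gives $d = 0$ and a reducible $f$). Thus the single surviving candidate is $f(w) = w^2 - 3w + 3$.

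It remains to rule out $f(w) = w^2 - 3w + 3$, and the key is a well-chosen evaluation. At $w = -1$ the factor $w^2 - 1$ in the recurrence vanishes, so the recurrence collapses to $q_n(-1) = q_{n+6}(-1)$; since our $n$ is odd with $3 \mid n$ we have $n \equiv 3 \bmod 6$, and iterating this relation up to the base case $q_{-3}$ gives $q_n(-1) = q_{-3}(-1) = -8$ for all such $n$. On the other hand $f(-1) = 7$, and $f \mid q_n$ forces $7 \mid q_n(-1) = -8$, a contradiction. This completes the argument.

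I expect the main obstacle to be establishing the three evaluation facts uniformly in $n$ --- $q_n(2) = 1$, $q_n(0) = -9$ (for $n \equiv 1 \bmod 4$), and $q_n(-1) = -8$ --- each by induction through the degree-jumping recurrence, and then being careful with the bookkeeping that translates ``$f$ reduces to $w^2 \bmod 3$ and defines $\Q(\sqrt{-3})$'' into the Diophantine conditions on $a,b,d$. Everything after that is the short case analysis above; the one delicate point is choosing the evaluation point $w = -1$ rather than $w = 0$ or $w = 2$ (both of which are consistent with $w^2 - 3w + 3$), so that the prime $7 = f(-1)$ visibly fails to divide the known value $q_n(-1) = -8$.
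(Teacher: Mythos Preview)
Your argument is internally sound, but it does not prove Lemma~\ref{lem4} as stated: the lemma asserts that \emph{no} monic quadratic factor of $q_n$ reduces to $w^2 \bmod 3$, whereas you impose from the outset the additional hypothesis that $f$ defines $\Q(\sqrt{-3})$ (equivalently, that its discriminant is $-3d^2$). Nothing in the lemma's hypotheses justifies this, and there are genuine candidates you never address --- for instance $w^2-3$, $w^2-6w+9$, and $w^2+3w-9$ all reduce to $w^2 \bmod 3$ but define $\Q(\sqrt 3)$, $\Q$, and $\Q(\sqrt 5)$ respectively. The paper's proof handles all of these by a longer chain of evaluations: $q_n(0)=-9$ constrains the constant term to $\pm 3,\pm 9$; then $q_n(2)=1$ cuts the list to four explicit candidates; $q_n(1)=-4$ and $q_n(-1)=-8$ eliminate all but $w^2-3$; and finally the value $q_n(\sqrt 3)=-12+6\sqrt 3$ disposes of that one.

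That said, your restricted version is exactly what is needed in the proof of Proposition~\ref{3divn}: there one has already supposed $\Q(\sqrt{-3})\subset k_n$, deduced that $\Lambda_n$ is quadratic with $\Lambda_n\equiv w^2\bmod 3$, and hence that $k_n=\Q(\sqrt{-3})$ --- precisely the extra hypothesis you invoke. Your discriminant constraint then buys a much shorter endgame (only the two evaluations $w=2$ and $w=-1$, plus the neat observation that $f(2)\equiv 1\bmod 3$ pins $f(2)$ to $+1$ rather than $\pm 1$). So what you have is correct and efficient for the downstream application; to match the lemma as written you would either need to drop the discriminant assumption and fall back on more evaluations, or restate the lemma as ``no quadratic factor of $q_n$ defining $\Q(\sqrt{-3})$ reduces to $w^2 \bmod 3$.''
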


\begin{proof}
We've seen that the constant term of $q_n$ is $-9$. So the constant term of
such a quadratic factor is $\pm 3$ or $\pm 9$. Since $q_n$ is monic, we can assume that such a factor is as well. So, if such a factor exists, it's of the form $w^2 + 3aw + b$ where $b \in \{ \pm3 , \pm 9\}$.

Now, by induction, $q_n(2) = 1$, for all $n$. So, the quadratic factor
must evaluate to $\pm1$ when $w = 2$. This shows that the factor is one of the following: $w^2-3w+3$, $w^2-3$, $w^2 -6w+9$, or $w^2 + 3w -9$.

We can also argue, by induction, that
$q_n(1) = -4$ when $3|n$ and $n \equiv 1$ mod 4. So, the quadratic factor must 
divide $4$ when $w = 1$ is substituted. This eliminates $w^2 + 3w-9$ as a candidate.

Similarly, the requirement that $q_n(-1) = -8$ leaves only $w^2-3$ as a candidate. However, an induction argument shows that $q_n(\sqrt{3}) = -12 + 6 \sqrt{3}$ when $3|n$ and $n \equiv 1$ mod 4. So, $w^2-3$ is also not a quadratic factor.  Thus, as required, $q_n$ has no quadratic factor that reduces to $w^2$ mod 3. 
\end{proof}
We now have the ingredients to prove the following:

\begin{proposition}\label{3divn} Let $K_n$ denote the $(-2,3,n)$ pretzel knot with trace field $k_n$.  Suppose further that $3 | n$. Then $\Q(\sqrt{-3})$ is not a subfield of $k_n$.
\end{proposition}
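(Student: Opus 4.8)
The plan is to mirror the proof of Proposition~\ref{noimag} almost verbatim, with the prime $3$ and the field $\Q(\sqrt{-3})$ in place of $2$ and $\Q(i)$. The organizing fact is that $3$ ramifies in $\Q(\sqrt{-3})$: its ring of integers is $\Z[\omega]$ with $\omega = (1+\sqrt{-3})/2$, we have $3\Z[\omega] = (\sqrt{-3})^2$, and $d_{\Q(\sqrt{-3})} = -3$. Hence if $\Q(\sqrt{-3}) \subseteq k_n$, then every prime $\pr$ of $\ring_{k_n}$ over $3$ lies over the unique prime $\pri$ of $\Z[\omega]$ above $3$, and by multiplicativity of ramification indices in the tower $\Q \subseteq \Q(\sqrt{-3}) \subseteq k_n$ we get $e(\pr/3) = e(\pr/\pri)\,e(\pri/3) = 2\,e(\pr/\pri)$, which is even. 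Thus the whole argument consists in producing, in each case, a prime over $3$ in $k_n$ of odd ramification index.

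First I would read off the needed divisibility data. Let $\Lambda_n$ be the irreducible factor of $q_n$ giving the complete structure, so $k_n = \Q(\beta_n)$ for a root $\beta_n$ of $\Lambda_n$. Lemma~\ref{lem3} gives $q_n - (1-w)q_n' \equiv -w \bmod 3$, so modulo $3$ any common factor of $q_n$ and $q_n'$ divides $w$; hence the only irreducible factor that can occur to a power greater than one in $q_n \bmod 3$ is $w$ itself. With Proposition~\ref{sqrt3} this produces two regimes. When $n \equiv 3 \bmod 4$ we have $w \nmid q_n \bmod 3$, so $q_n$, and a fortiori $\Lambda_n$, is squarefree modulo $3$; by Theorem~\ref{num}(i) then $3 \nmid \Delta(\beta_n)$, whence $3 \nmid d_{k_n}$ (as $d_{k_n} \mid \Delta(\beta_n)$) and $3$ is unramified in $k_n$. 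This already contradicts $\Q(\sqrt{-3}) \subseteq k_n$ and disposes of the case. When $n \equiv 1 \bmod 4$ we instead have $w^2 \mid q_n$ and $w^3 \nmid q_n$ modulo $3$, so $q_n \equiv w^2 \bar h \bmod 3$ with $\bar h$ squarefree and prime to $w$.

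The case $n \equiv 1 \bmod 4$ carries the real content and is the step I expect to be the main obstacle, because $\Z[\beta_n]$ need not be maximal at $3$, so the factorization of $3\ring_{k_n}$ cannot be read directly off $\Lambda_n \bmod 3$. Here I would establish the evident $3$-adic analog of Lemma~\ref{lem1}, running the same Hensel-lifting analysis with the squared linear factor $w$ playing the role of $(w+1)$. The power of $w$ dividing $\Lambda_n \bmod 3$ is at most $2$. If it is $0$ or $1$, then $\Lambda_n$ is squarefree modulo $3$ and $3$ is unramified in $k_n$, contradicting $\Q(\sqrt{-3}) \subseteq k_n$. If it is $2$, the analog of Lemma~\ref{lem1} yields either $3\ring_{k_n} = \pr_1 \cdots \pr_{m+1}$ with all $e_i = 1$ --- again $3$ unramified, a contradiction --- or $3\ring_{k_n} = \pr_1^2 \pr_2 \cdots \pr_m$, where $\pr_1$ (arising from $w^2$) is the only ramified prime and $\pr_2, \ldots, \pr_m$ are unramified. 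In this last alternative the evenness of every $e(\pr_j/3)$ forces $m = 1$, so $\Lambda_n$ is quadratic with $\Lambda_n \equiv w^2 \bmod 3$; but Lemma~\ref{lem4} asserts precisely that $q_n$ has no such quadratic factor. This final contradiction completes the proof.
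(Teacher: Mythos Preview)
Your proposal is correct and follows essentially the same approach as the paper's proof, which explicitly says to ``apply the same argument used for the case $3 \mid n$ in the proof of Proposition~\ref{noimag} replacing Lemma~\ref{notimag} with Lemma~\ref{lem4}.'' You have simply unpacked that deferral: the split into $n\equiv 3\bmod 4$ (squarefree mod $3$, so $3\nmid d_{k_n}$) versus $n\equiv 1\bmod 4$ (invoke the $3$-adic analogue of Lemma~\ref{lem1}, force $m=1$, then contradict Lemma~\ref{lem4}) is exactly the paper's argument, with your added observation that the power of $w$ in $\Lambda_n\bmod 3$ could a priori be $0$ or $1$ handled cleanly by the same discriminant reasoning.
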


\begin{proof}
As in the proof of Proposition~\ref{noimag}, let $\rho_0$ denote the parabolic representation of $\pi_1(S^3 \setminus K_n)$ corresponding to the discrete faithful representation, $\Lambda_n(w)$ the irreducible factor of $q_n(w)$ corresponding to this representation, and $\Gamma_n$ the image group.  Then $k_n= k\Gamma_n=\Q(\beta_n)$.  By Lemma~\ref{lem3}, the gcd of $q_n$ and $q'_n$ modulo $3$ is either $1$ or $w$ .

Since $w$ is not a factor of $q_n$ when $n \equiv 3$ mod 4, it follows that
$q_n$ and $q'_n$ have no common factors modulo $3$ in case both
$n \equiv 3$ mod 4 and $3|n$.  Therefore, $\Lambda_n$ has distinct irreducible factors mod $3$ and, by Theorem~\ref{num}, we conclude that $3$ doesn't divide the discriminant of $k_n$ so that  $\Q(\sqrt{-3})$ cannot be a subfield of $k_n$.

On the other hand, if $3|n$ and $n \equiv 1$ mod 4, then by Proposition~\ref{sqrt3} and Lemma \ref{lem3}, we deduce that the gcd of $q_n$ and $q_n'$ is $w$ and moreover that 
$$
q_n \equiv  w^2 \prod_{i=2}^m g_i(w),
$$
where $g_i$ are relatively prime and irreducible.  Since the behavior of the prime ideal 3 in the ring of integers of $\Q(\sqrt{-3})$ is identical to that of the ideal 2 in the ring of integers of $\Q(i)$ and since these fields are both quadratic imaginary, we can apply the same argument used for the case $3 \mid n$ in the proof of Proposition \ref{noimag} replacing Lemma \ref{notimag} with Lemma \ref{lem4}.
\end{proof}

\subsection{Case 2}

Let $n$ be negative and odd with $3 \nmid n$ and 
let $p_n \in \Z[v]$ be the polynomials defined in Section~\ref{secprelim}. We will
argue that $p_n$ has no repeated roots modulo $3$. It will then follow
from Theorem~\ref{num} that $3$ does not divide the discriminant of
the trace field $k_n$ so that $\Q(\sqrt{-3})$ cannot be a subfield.

A straightforward induction shows that the following is a closed form
for $p_n$ modulo $3$:
\begin{equation} \label{eqab}
p_n \equiv {[} (a+b)^k-(a-b)^k-(a+b)^{k+2}+(a-b)^{k+2} {]}/(vb)
\end{equation} 
where $a= (v^2+v-1)$, $b^2=(v^2-1)(v^2-v-1)$, and $k = (1-n)/2$.
This formula requires a little interpretation. First,
note that it can be rearranged as
\begin{equation}\label{eqabsum}%
p_n \equiv \frac{-1}{v} \left( \sum_{ \begin{smallmatrix} 1 \leq i \leq k \\ i \mbox{ {\tiny odd}}\end{smallmatrix} } \binom{k}{i} a^{k-i}b^{i-1} - 
\sum_{ \begin{smallmatrix} 1 \leq i \leq k +2 \\ i \mbox{ {\tiny odd}}\end{smallmatrix} } \binom{k+2}{i} a^{k+2-i}b^{i-1} \right).
\end{equation}
This shows that $p_n =v^{-1} g_n$, where $g_n \in Z[v]$.  Furthermore, the constant term of $p_n$ is $2^{-(n+1)/2}$, so that $v$ is not a factor of $p_n$ modulo $3$.  Therefore, $g_n \equiv -v p_n$ where 
\begin{equation} \label{eqgab}
g_n = {[} (a+b)^k-(a-b)^k-(a+b)^{k+2}+(a-b)^{k+2} {]}/b.
\end{equation}

Thus, our goal is to argue that $g_n$ has no repeated
factors modulo $3$. Let $\F_3 \cong \Z/3\Z$ be the field of 
three elements and fix an algebraic closure $\Fb_3$. We will
take advantage of the fact that $f, g \in \F_3[v]$ have a common
factor if and only if $f$ and $g$ have a common
root in $\Fb_3$. For the sake of convenience, we will often use 
the same symbol, $g_n$, $a$, etc.\ to represent both the polynomial
in $\Z[v]$ and its reduction mod $3$ in $\F_3[v]$.

We first examine when $a$ or $b^2$ can have common factors with $g_n$.

\begin{lemma}
The polynomials $b^2$ and $g_n$ in $\F_3[v]$ have no common factor. 
\end{lemma}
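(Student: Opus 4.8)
The plan is to pass to the algebraic closure $\Fb_3$ and use the principle stated in the text: two polynomials in $\F_3[v]$ share a factor exactly when they share a root in $\Fb_3$. So I would assume that some $\gamma \in \Fb_3$ satisfies $b^2(\gamma) = 0$ and try to show $g_n(\gamma) \neq 0$, which contradicts the existence of a common factor.

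The key algebraic observation is that, from the closed form (\ref{eqgab}), one has $b\,g_n = F(b)$ where $F(b) = (a+b)^k - (a-b)^k - (a+b)^{k+2} + (a-b)^{k+2}$, and $F$ is an \emph{odd} function of $b$ (that is, $F(-b) = -F(b)$). Hence $F(b) = b\,G(a,b^2)$ for some polynomial $G$, and $g_n = G(a,b^2)$ is a genuine element of $\F_3[v]$. Evaluating at a root $\gamma$ of $b^2$ gives $g_n(\gamma) = G(a(\gamma),0)$, and since $G(a,0) = \partial F/\partial b\,|_{b=0}$, a direct differentiation yields
$$ g_n(\gamma) = 2\,a(\gamma)^{k-1}\bigl(k - (k+2)\,a(\gamma)^2\bigr). $$
Thus the whole lemma reduces to checking that this quantity is nonzero mod $3$.

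Next I would factor $b^2 = (v-1)(v+1)(v^2 - v - 1)$ over $\F_3$, noting that $v^2 - v - 1$ is irreducible (its discriminant is $2$, a nonsquare mod $3$), so its roots lie in $\F_9 \setminus \F_3$. I would first dispose of the factor $a(\gamma)^{k-1}$ by showing $a(\gamma) \neq 0$: substitution gives $a(\pm 1) = \pm 1 \neq 0$, while at a root of $v^2 - v - 1$ one reduces $a = v^2 + v - 1$ to $a = 2v$, which is coprime to the irreducible $v^2 - v - 1$. This guarantees $a(\gamma)^{k-1}$ never vanishes.

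The main work, and the step I expect to be the principal obstacle, is showing $k - (k+2)\,a(\gamma)^2 \not\equiv 0$. For $\gamma = \pm 1$ one has $a(\gamma)^2 = 1$, so the expression is $-2 \equiv 1$. For $\gamma$ a root of $v^2 - v - 1$, the identity $\gamma^2 = \gamma + 1$ gives $a(\gamma)^2 = 4\gamma^2 \equiv \gamma + 1$, so the expression becomes $-2 - (k+2)\gamma$; this can vanish only if $(k+2)\gamma = 1$, which is impossible because $(k+2)\gamma$ equals $0$ when $k+2\equiv 0$ and lies outside $\F_3$ otherwise, so it is never $1 \in \F_3$. The delicate points are precisely this computation in $\F_9$ and the handling of the integer parameter $k$ after reduction mod $3$, together with the bookkeeping that legitimizes replacing $g_n(\gamma)$ by $\partial_b F(0)$ at a root of $b^2$; the rest is routine substitution.
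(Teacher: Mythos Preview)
Your proof is correct and follows the same overall strategy as the paper: pass to $\Fb_3$ and check that no root of $b^2$ is a root of $g_n$ by exploiting the closed form (\ref{eqgab}) at $b^2=0$.

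The organization differs in one respect. The paper splits the roots of $b^2$ into two cases: for $v=\pm 1$ it avoids the closed form entirely and instead computes $p_n(\pm 1)$ directly by induction on the recursion defining $p_n$; only for roots of $v^2-v-1$ does it evaluate the closed form at $b^2=0$. You instead treat all roots uniformly via the identification $g_n|_{b^2=0}=\partial_b F|_{b=0}=2a^{k-1}\bigl(k-(k+2)a^2\bigr)$, and then carry the integer parameter $k$ through an explicit $\F_9$-computation to kill the factor $k-(k+2)a^2$. Your approach buys uniformity and avoids a separate induction; the paper's approach keeps the $k$-dependence out of the final check at the cost of handling $v=\pm1$ by a different method. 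Both are equally valid and of comparable length.
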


\begin{proof}
By induction (using the recurrence given in Section~\ref{secprelim}), 
$p_n(1) \equiv -1$ and $p_n(-1) \equiv (-1)^{-(n+1)/2}$ for all odd and
negative $n$. So neither $v = 1$ nor $v=-1$ is a root of $p_n$ and, hence, neither $(v-1)$ nor $(v+1)$ is a factor $g_n$ in $\F_3[v]$. 

Using the form of $p_n$ given by Equation~(\ref{eqabsum}) and
evaluating at a root $v_0$ of $(v^2-v-1)$ (i.e., working in $\Fb_3$), 
the powers of $b^2$ become zero and we're left with
$g_n(v_0) \equiv a^{k-1} (a^2 - 1) \equiv a^{k-1} v_0(v_0+1)(v_0-1)$. But, at a root $v_0$ of $(v^2-v-1)$, $a$ becomes $-v_0$. Since neither $0$, nor $\pm 1$ is a root of $v^2-v-1$, $g_n(v_0) \not\equiv 0$.
Thus, $(v^2-v-1)$ also has no common factor with $g_n$ in $\F_3[v]$.
\end{proof}

\begin{lemma}
The irreducible polynomial $a = v^2+v-1$ is a factor of $g_n$ mod 3 if and only if $n \equiv 1$ mod 4. However, it is never a repeated factor.
\end{lemma}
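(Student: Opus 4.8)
The plan is to exploit the closed form (\ref{eqgab}) for $g_n$ modulo $3$ together with the fact that $a=v^2+v-1$ is irreducible over $\F_3$ (its discriminant $1+4\equiv 2$ is a non-square mod $3$). Because $a$ is irreducible, $a\mid g_n$ in $\F_3[v]$ if and only if $g_n(v_0)\equiv 0$ for a root $v_0\in\Fb_3$ of $a$, and $a^2\mid g_n$ if and only if, in addition, $v_0$ is a root of the ``first-order part'' of $g_n$ along $a$. So I would fix such a $v_0$, where $v_0^2\equiv 1-v_0$, and evaluate everything there. At $v_0$ the quantity $a$ vanishes, so $a\pm b$ collapses to $\pm b$ and the four powers in (\ref{eqgab}) simplify dramatically; the whole matter reduces to tracking the parity of the exponent $k=(1-n)/2$.

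For the ``if and only if'', I would substitute $a=0$ into (\ref{eqgab}). When $k$ is even the contributions of $(a\pm b)^k$ and of $(a\pm b)^{k+2}$ each cancel in pairs, giving $g_n(v_0)\equiv 0$ and hence $a\mid g_n$. When $k$ is odd they reinforce, and after dividing by $b$ one is left with $g_n(v_0)\equiv 2b^{k-1}(1-b^2)$ (both exponents even, so this is a genuine element of $\F_3(v_0)$ expressed through $b^2$). Here I would record the evaluations at $v_0$: from $v_0^2\equiv 1-v_0$ one gets $b^2=(v_0^2-1)(v_0^2-v_0-1)\equiv v_0-1$, while $a(0)\equiv a(-1)\equiv -1$ and $a(1)\equiv 1$ force $v_0\ne 0,1,-1$. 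Hence $b\ne 0$ and $1-b^2\equiv 2-v_0\ne 0$, so $g_n(v_0)\ne 0$ and $a\nmid g_n$. Finally I would translate the parity of $k$ into a congruence on $n$: writing $n=4t+1$ gives $k=-2t$ even, while $n=4t+3$ gives $k=-2t-1$ odd, so $a\mid g_n$ exactly when $n\equiv 1\bmod 4$.

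For the non-repetition claim I only need to treat the case $a\mid g_n$, i.e.\ $k$ even. The plan is to expand (\ref{eqgab}) to first order in $a$, that is, modulo $a^2$. By the binomial theorem, in $(a+b)^k-(a-b)^k$ only the odd-index terms survive, and modulo $a^2$ only the linear term remains, giving $2k\,a\,b^{k-1}$; similarly $(a+b)^{k+2}-(a-b)^{k+2}\equiv 2(k+2)\,a\,b^{k+1}$. Writing $B=b^2=(v^2-1)(v^2-v-1)$ and dividing by $b$ yields
\[
g_n\equiv 2a\,B^{(k-2)/2}\bigl(k-(k+2)B\bigr)\pmod{a^2},
\]
which is a genuine polynomial in $v$ since $k$ is even. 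Thus $a^2\mid g_n$ iff the coefficient $C=B^{(k-2)/2}\bigl(k-(k+2)B\bigr)$ vanishes at $v_0$. Since $B(v_0)\equiv v_0-1\ne 0$, this reduces to $k-(k+2)(v_0-1)\equiv 0$, i.e.\ the affine expression $(2k+2)-(k+2)v_0$ vanishing at $v_0$. As $v_0\notin\F_3$, this forces both $\F_3$-coefficients to vanish, i.e.\ $k\equiv 1$ and $k\equiv 2\bmod 3$ simultaneously --- which is impossible. Hence $C(v_0)\ne 0$ and $a$ is a simple factor.

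The main obstacle I anticipate is this non-repetition step: making the ``modulo $a^2$'' expansion rigorous and, in particular, verifying that the first-order coefficient $C$ is a true element of $\F_3[v]$ rather than an expression genuinely involving the square root $b$. This works precisely because $k$ is even (so $b^{k-2}=B^{(k-2)/2}$), which is exactly why the two halves of the lemma dovetail. Everything after that is a short $\F_3$-linear-algebra observation exploiting the irreducibility of $a$, namely that no nonzero $\F_3$-affine function can vanish at a root of $a$.
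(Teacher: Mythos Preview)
Your argument is correct, and the overall strategy matches the paper's: expand the closed form (\ref{eqgab}) binomially and examine the term linear in $a$. The execution differs in two places. For the ``if and only if'' direction the paper argues by induction on the recurrence for $p_n$ (observing that $a\equiv v^2+v+2$ is precisely the coefficient appearing in that recursion), whereas you evaluate the closed form directly at $a=0$ and read off the parity of $k$. For non-repetition, the paper records the linear-in-$a$ coefficient as $b^{k-2}(1-b^2)$ and then checks that neither factor shares a root with $a$; you instead retain the $k$-dependent coefficient $B^{(k-2)/2}\bigl(k-(k+2)B\bigr)$ and finish with the observation that a nonzero $\F_3$-affine expression in $v_0$ cannot vanish because $v_0\notin\F_3$. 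Your route is arguably cleaner, since it handles all residues of $k\bmod 3$ uniformly and makes the dependence on $k$ in the linear coefficient explicit.
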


\begin{proof}
That $a$ is a factor of $p_n$ (hence of $g_n$) if and only if $n \equiv 1$ mod 4 is easily verified by induction. (Note that $a \equiv v^2+v+2$ appears as part of the recursion equation). 

Suppose $n \equiv 1$ mod 4 (so that $k$ is even) and write $g_n$ as a sum:
\begin{eqnarray*}
g_n & = & \left( \sum_{ \begin{smallmatrix} 1 \leq i \leq k \\ i \mbox{ {\tiny odd}}\end{smallmatrix} } \binom{k}{i} b^{k-1-i}a^{i} - 
\sum_{ \begin{smallmatrix} 1 \leq i \leq k +2 \\ i \mbox{ {\tiny odd}}\end{smallmatrix} } \binom{k+2}{i}
 b^{k+1-i}a^{i} \right) \\
& = & -a^3
\sum_{ i \geq 3, \mbox{ {\tiny odd}}} \left[ \binom{k}{i} b^{k-1-i}a^{i-3} -
\binom{k+2}{i} b^{k+1-i}a^{i-3} \right] - a b^{k-2}(1-b^2).
\end{eqnarray*}
Thus, $a^2$ and $g_n$ share a factor in $\F_3[v]$ only if $a$ and
$b^{k-2}(1-b^2)$ do. 

If $a$ and $b^{k-2}(1-b^2)$ have a common factor, then $a$ shares
a root in $\Fb_3$ with $b^{k-2}$ or
$(1-b^2) \equiv -v(v^3 -v^2 + v+1)$.
However, if $v_0$ is a root of $a$, then $v_0 \neq \pm 1$ because $a(1) = 1^2 +1 -1 = 1 \neq 0$ and $a(-1) = -1 \neq 0$.
Also, at a root $v_0$ of $a$, $v_0^2-v_0-1$ becomes $v_0$ which is not zero
since $a(0) = -1 \neq 0$. So, at a root of $a$, the factor $b^{k-2}$ is
not zero.

As for $(1-b^2)$, evaluated at a root $v_0$ of $a$, $v_0^3 - v_0^2 + v_0 + 1 \equiv v_0-1$. Thus neither this factor of $(1-b^2)$ nor the other factor, $v$, is zero at $v_0$, since, again, $v_0 \neq 0,1$.
Thus, $(1-b^2)$ and $a$ also share no root. It follows that $a^2$ does not divide $g_n$ modulo 3. 
\end{proof}

\begin{proposition} \label{propQ3case2}
Let $n$ be odd and negative with $3 \nmid n$. Then $g_n$ and $g'_n$ have no common factor in $\F_3[v]$.
\end{proposition}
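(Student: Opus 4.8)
The plan is to show that $g_n$ and its derivative $g_n'$ share no common factor in $\F_3[v]$, which is equivalent to showing that $g_n$ has no repeated root in $\Fb_3$. The strategy mirrors the approach taken for $q_n$ modulo $2$ in Proposition~\ref{prop}: I would establish a clean multiplicative identity relating $g_n$ and $g_n'$ modulo $3$, and then show that the only candidate repeated factors are ruled out by the two preceding lemmas. First, I would differentiate the closed form~(\ref{eqgab}) directly. Writing $s = a+b$ and $t = a-b$ (so that $s t = a^2 - b^2$), the quantity $g_n = (s^k - t^k - s^{k+2} + t^{k+2})/b$ can be differentiated using $\frac{da}{dv} = 2v+1 \equiv 2v+1$ and $2b\,\frac{db}{dv} = \frac{d(b^2)}{dv}$, where $b^2 = (v^2-1)(v^2-v-1)$. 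The goal is to combine $g_n$ and $g_n'$ into an expression of the form $(\text{factor}) \cdot g_n + (\text{factor}) \cdot g_n' \equiv (\text{small explicit polynomial})$, in the same spirit as Lemma~\ref{lem3}, so that $\gcd(g_n, g_n')$ must divide a polynomial with few irreducible factors in $\F_3[v]$.

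The key step is to identify all irreducible factors of $\Fb_3$-roots at which both $g_n$ and $g_n'$ could simultaneously vanish. At a repeated root $v_0$ of $g_n$, either $b(v_0) = 0$ or $b(v_0) \neq 0$. If $b(v_0) \neq 0$, then from the numerator $s^k - t^k - s^{k+2} + t^{k+2} = (s^k-t^k)(1 - st) + $ (cross terms) one can factor and analyze when both the numerator and its derivative vanish; the condition $st = a^2 - b^2 = 1$ (since $a^2 - b^2 = (v^2+v-1)^2 - (v^2-1)(v^2-v-1)$, which I would compute explicitly and reduce mod $3$) is likely to pin down the locus of possible double roots. If $b(v_0) = 0$, then $v_0$ is a root of $b^2 = (v^2-1)(v^2-v-1)$, and the first and third lemmas of this subsection already establish that such $v_0$ are not roots of $g_n$ at all — so these contribute nothing. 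This is exactly where I would invoke the two lemmas already proved: the first shows $b^2$ and $g_n$ share no factor, and the second shows $a = v^2+v-1$ (the factor that does divide $g_n$ when $n \equiv 1 \bmod 4$) is never a repeated factor.

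The main obstacle I anticipate is controlling the derivative of the numerator cleanly enough to isolate the repeated-root locus, because differentiating $s^k$ and $t^k$ reintroduces a factor of $k = (1-n)/2$ whose residue modulo $3$ depends on $n$, and the substitution $s,t \mapsto a \pm b$ mixes the $a$- and $b$-dependence. I would manage this by passing to the Laurent-variable $x$ with $w = x + x^{-1}$ (as in Lemma~\ref{lem2}) if the $v$-computation becomes unwieldy, since in the $x$-coordinate the analogue of $g_n$ has the particularly transparent form coming from $f_k = x[x^{2k} + x^{-2k} + (x^3 + 4x^2 - 8 + 4x^{-2} + x^{-3})]/(x+1)^2$; there the repeated roots correspond to coincidences among powers of $x$, which are easy to enumerate as roots of unity in $\Fb_3$. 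After the case analysis, the conclusion is immediate: since $a$ is the only factor of $g_n$ that could conceivably repeat and it provably does not, and since the factors of $b^2$ are excluded entirely, $g_n$ is separable modulo $3$, hence $\gcd(g_n, g_n') = 1$ in $\F_3[v]$, as claimed.
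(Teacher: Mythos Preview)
Your proposal has a genuine gap in its concluding logic. You write that ``since $a$ is the only factor of $g_n$ that could conceivably repeat and it provably does not, and since the factors of $b^2$ are excluded entirely, $g_n$ is separable modulo $3$.'' But nothing you have outlined establishes that $a$ is the \emph{only} candidate repeated factor. The two lemmas you cite show (i) that factors of $b^2$ are not factors of $g_n$ at all, and (ii) that $a$, when it divides $g_n$, does so only to the first power. Neither lemma says anything about an arbitrary irreducible factor of $g_n$ that happens to be coprime to both $a$ and $b^2$ --- and ruling out repeated roots \emph{among exactly those factors} is the entire content of the proposition. Your outline gestures toward ``analyzing when both the numerator and its derivative vanish'' in the case $b(v_0)\neq 0$, but then jumps to the conclusion as if this analysis had been carried out and had yielded no obstruction.

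The paper handles precisely this case by a direct argument that you do not reproduce: take a hypothetical common root $v_0$ of $g_n$ and $g_n'$ with $v_0 \notin \{0,\pm 1\}$ and $a(v_0),\, b^2(v_0)\neq 0$, and use $g_n(v_0)=0$ to solve for the ratio $-b\,\dfrac{(a+b)^k+(a-b)^k}{(a+b)^k-(a-b)^k}$ as an explicit rational function of $v_0$. Then differentiate (splitting into the cases $k\equiv 0$ and $k\equiv 1 \bmod 3$, since the derivative reintroduces $k$) and use $g_n'(v_0)=0$ to obtain a second, independent expression for the same ratio. Equating the two forces $v_0+2\equiv 0$, i.e.\ $v_0=1$, which has already been excluded. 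This comparison of two expressions is the crux; your proposed identity of the shape $(\text{factor})\cdot g_n + (\text{factor})\cdot g_n' \equiv (\text{small polynomial})$ would accomplish the same thing if it existed, but you have not produced it, and the attempted factorisation $(s^k-t^k)(1-st)+\text{cross terms}$ is not correct as stated (note also that $st=a^2-b^2\equiv v^2\bmod 3$, not $1$). The Laurent-variable fallback you mention applies to the $q_n$ family, not directly to $p_n$ or $g_n$, so it would not rescue the argument without further work.
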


\begin{proof}
Suppose, for a contradiction, that $g_n$ and $g'_n$ have a common factor 
in $\F_3[v]$.  Then they will have a common root $v_0 \in \Fb_3$.
As we have noted, $v$ is not a factor of $p_n$, so, it is not a
common factor of $g_n$ and $g'_n$. Thus,
$v_0 \neq 0$, and the lemmas show that $v_0$ is not a root of
$a$ or any factor of $b^2$. In particular, $v_0 \neq \pm 1$. 

Most of our calculations in this proof will take place in $\Fb_3$
and we will frequently evaluate polynomials at $v_0$ to get a
value in $\Fb_3$. To facilitate our calculations, we fix a square root
of $b^2(v_0)$ and call it $b$. Since $v_0$ is not a root of $b^2$, 
$b$ is not zero.

Note that $(a+b)^k-(a-b)^k$ is not zero at $v = v_0$. For otherwise, evaluated at $v_0$, we would have $(a+b)^k = (a-b)^k$. On the other hand, since $v_0$ is a zero of $g_n$, we
have also that $(a+b)^{k+2}-(a-b)^{k+2} = 0$, or, equivalently, $(a+b)^{k
+2} = (a-b)^{k+2}$ when $v = v_0$. It follows that, either 
both $(a+b)^k$ and $(a-b)^k$ are zero at $v_0$, or else, 
$(a+b)^2 = (a-b)^2$ when evaluated at $v_0$.  
Now, if $(a+b)^2 = (a-b)^2$, we deduce that $ab$ is zero at $v_0$, a contradiction. On the other hand, if both
$(a+b)^k$ and $(a-b)^k$ are zero, then $a+b$ and $a-b$ are too, which 
again implies $v_0$ is a root of $a$, a contradiction.

Now, at $v = v_0$, we can write
\begin{eqnarray*}
g_n & = &
{[} (a+b)^k-(a-b)^k-(a+b)^{k}(a+b)^2+(a-b)^{k}(a-b)^2 {]}/b \\
& \equiv &
{[} (a+b)^k(1-a^2-b^2+ab)-(a-b)^k(1-a^2-b^2-ab){]}/b \\
& \equiv &
(v_0+1)^3(v_0-1){[} (a+b)^k-(a-b)^k{]}/b +a{[}(a+b)^k+(a-b)^k{]}
\end{eqnarray*}
Thus, evaluating at $v_0$, we will have
$$-b \frac{(a+b)^k+(a-b)^k}{ (a+b)^k-(a-b)^k} 
= (v_0+1)^3(v_0-1)/a.$$

Since $k = (1-n)/2$ in Equation~(\ref{eqab}),
we can assume that $k \equiv 0$ or $1$.
Our goal is to derive a contradiction in both cases.

Suppose first that $k \equiv 0$. Then the derivative is
\begin{eqnarray*}
g'_n & \equiv & - g_n(b'/b) 
+
{[} k(a+b)^{k-1}(a'+b')-k(a-b)^{k-1}(a'-b') \\
&& \mbox{ } - (k+2)(a+b)^{k+1}(a'+b')+(k+2)(a-b)^{k+1}(a'-b') {]}/b \\
& \equiv & \left( g_n(v^3-v+1) 
+
(v+1)^3 b {[}(a+b)^{k}-(a-b)^{k}{]} \right. \\
&& \mbox{ } \left.  + (v^5+v^4+v-1){[}(a+b)^{k}+(a-b)^{k} {]} \right)/b^2
\end{eqnarray*}
where the first line suggests an algebraic means of deriving the formula
given in the second line. Again, the $b^2$ in the denominator of the 
second line is only there for the sake of presenting a simple formula; 
it cancels to leave a polynomial $g'_n \in \F_3[v]$.

As above, we may assume that we are evaluating these expressions at a
common zero $v_0$ of $g_n$ and $g'_n$, which is not a zero of $v+1$ nor of
$(a+b)^k-(a-b)^k$.
It follows that the factor $v^5+v^4+v-1$ is also not zero at $v_0$. 
So, at $v_0$ we have
$$-b \frac{(a+b)^k+(a-b)^k}{ (a+b)^k-(a-b)^k} 
= \frac{b^2(v_0+1)^3}{v_0^5+v_0^4+v_0-1}.$$

Comparing our two expressions for $-b \frac{(a+b)^k+(a-b)^k}{ (a+b)^k-(a-b
)^k} $ we see that
$$ (v_0-1) (v_0^5+v_0^4+v_0-1) = ab^2  \Rightarrow  v_0 + 2 = 0.$$

So, the only possibility for a common zero is $v_0 = -2 \equiv 1$. 
However, we have already noted that $v_0 \neq 1$. The contradiction
completes the argument in the case $k \equiv 0$.

The argument for the $k \equiv 1$ case is similar and based on using $g_n$ 
and $g'_n$ to derive two different expressions for
$$-b \frac{(a+b)^{k-1}+(a-b)^{k-1}}{ (a+b)^{k-1}-(a-b)^{k-1}}.$$
\end{proof}\begin{proposition}\label{3ndivn} Let $K_n$ denote the $(-2,3,n)$ pretzel knot with trace field $k_n$.  Suppose further that $3 \nmid n$. Then $\Q(\sqrt{-3})$ is not a subfield of $k_n$.
\end{proposition}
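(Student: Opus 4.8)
The plan is to run the argument in complete parallel with the $3 \nmid n$ portion of the proof of Proposition~\ref{noimag}, replacing the prime $2$ by $3$ and $\Q(i)$ by $\Q(\sqrt{-3})$; all of the genuine work has already been carried out in Proposition~\ref{propQ3case2}, and what remains is to convert that statement about $g_n$ into a statement about the field discriminant and then rule out the quadratic subfield. First I would fix notation exactly as in the proof of Proposition~\ref{noimag}: let $\rho_0$ be the discrete faithful representation, and let $\Lambda_n(v) \in \Z[v]$ be the monic irreducible factor of $p_n$ cutting out the trace field, so that $k_n = \Q(\alpha_n)$ for a root $\alpha_n$ of $\Lambda_n$. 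Since $p_n$ has leading coefficient $\pm 1$, its irreducible factors may be taken monic, as Theorem~\ref{num} requires.

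The key step is to deduce that $\Lambda_n$ has distinct irreducible factors modulo $3$. By Proposition~\ref{propQ3case2}, $\gcd(g_n, g_n') = 1$ in $\F_3[v]$, so $g_n$ is squarefree modulo $3$. Since $g_n \equiv -v\,p_n \bmod 3$, the polynomial $v\,p_n$ is a unit multiple of $g_n$ and is therefore also squarefree modulo $3$; as $p_n$ divides $v\,p_n$ and any divisor of a squarefree polynomial is squarefree, $p_n$ itself is squarefree modulo $3$. Consequently its factor $\Lambda_n$ has distinct irreducible factors modulo $3$.

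With this in hand the conclusion is routine, following the $3 \nmid n$ case of Proposition~\ref{noimag}. By Theorem~\ref{num}(i), $3$ does not divide the discriminant $\Delta(\alpha_n)$ of $\Lambda_n$; since the field discriminant $d_{k_n}$ divides $\Delta(\alpha_n)$, we get $3 \nmid d_{k_n}$, i.e.\ $3$ is unramified in $k_n$. On the other hand the discriminant of $\Q(\sqrt{-3})$ is $-3$, so $3$ ramifies there, and by the standard multiplicativity of the discriminant in a tower of number fields (\cite{Ko}, Ch.~3) every prime dividing the discriminant of a subfield must divide $d_{k_n}$. Were $\Q(\sqrt{-3})$ a subfield of $k_n$, this would force $3 \mid d_{k_n}$, a contradiction; hence $\Q(\sqrt{-3})$ is not a subfield of $k_n$.

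I expect no serious obstacle at this stage, since the hard, field-dependent computation — the $(a \pm b)$ manipulations in $\Fb_3$ — is entirely absorbed into Proposition~\ref{propQ3case2}. The only points that need care are the bookkeeping passage from ``$g_n$ squarefree'' to ``$p_n$, and hence $\Lambda_n$, squarefree'' (which uses $v \nmid p_n \bmod 3$, valid because the constant term $2^{-(n+1)/2}$ of $p_n$ is a unit modulo $3$) and the requirement that $\Lambda_n$ be taken monic so that Theorem~\ref{num} applies.
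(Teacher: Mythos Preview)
Your proposal is correct and follows essentially the same route as the paper's own proof: invoke Proposition~\ref{propQ3case2} to get $g_n$ squarefree mod~$3$, pass from $g_n$ to $p_n$ to $\Lambda_n$ using $g_n \equiv \pm v\,p_n$ and $v \nmid p_n$, and then apply Theorem~\ref{num} together with the discriminant tower argument to exclude $\Q(\sqrt{-3})$. Your write-up is in fact slightly more careful about the bookkeeping (monicity of $\Lambda_n$, the unit factor, the constant term of $p_n$) than the paper's version.
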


\begin{proof}
As in the proof of Proposition~\ref{3divn}, let $\rho_0$ denote
the discrete faithful representation of $\pi_1(S^3 \setminus K_n)$,
let $\Lambda_n(v)$ be
the irreducible factor of $p_n$ giving the representation
corresponding to the complete structure, and $\Gamma_n$ the 
image group. Then $k_n = k \Gamma_n = \Q(\alpha_n)$ for
some root $\alpha_n$ of $\Lambda_n$.

Since, by Proposition~\ref{propQ3case2}, $g_n$ and $g'_n$ have no common factors in $\F_3[v]$,
$g_n$ has distinct irreducible factors modulo $3$. Since
$v$ is not a factor of $p_n$ and $g_n \equiv v p_n$, it follows
that $p_n$ and, therefore, $\Lambda_n$ also have distinct irreducible
factors modulo $3$. By Theorem~\ref{num}, $3$ does not divide
the discriminant of $k_n$ so that $\Q(\sqrt{-3})$, having discriminant
$-3$, cannot be a subfield.
\end{proof}

\begin{proof}[Proof of Proposition \ref{nosqrt3}] This follows immediately from Propositions \ref{3divn} and \ref{3ndivn}.
\end{proof}

\section{Commensurability classes of Montesinos knots \label{secMont}}

Let $K$ be a hyperbolic Montesinos knot and $M = S^3 \setminus K$ its
complement.
According to Theorem~\ref{thmRW}, we can ensure that $M$ is the only knot complement in its commensurability class
by showing that $K$ enjoys the following three properties.
\begin{enumerate}
\item $K$ has no lens space surgeries.
\item Either $K$ has no symmetries, or it has only a strong inversion
and no other symmetries.
\item $K$ admits no hidden symmetries.
\end{enumerate}

The first two properties are well understood. According to \cite{M}, $K$ has no 
non-trivial cyclic, and hence no lens space, surgeries unless $K$ is the 
$(-2,3,7)$ pretzel knot or $K$ is of the form $M(x, 1/p,1/q)$ with 
$x \in \{ -1 \pm 1/2n, -2 + 1/2n \}$ and $n$, $p$, and $q$ positive integers. 
(No examples of a $M(x, 1/p, 1/q)$ knot with a lens space surgery
are known, but it is remains an open problem to show that there are none.)
As for the second property, the symmetries of Montesinos knots are
classified in \cite{BZ,S}. Thus, for a broad class of Montesinos knots,
understanding the commensurability class comes down to understanding 
hidden symmetries.

For example, if we restrict to the class of three tangle pretzel knots,
we have the following:

\begin{theorem} \label{thmpqr}
Let $K$ be a $(p, q, r )$ pretzel knot 
with $|p|, |q|, |r| > 1$,
$\{p,q,r \} \not\in \{ \{-2,3,5\}, \{-2,3,7 \} \}$,
and exactly two of $p,q,r$ odd with those two unequal.
If $K$ has no hidden symmetries, then $S^3 \setminus K$ is 
the only knot complement in its commensurability class.
\end{theorem}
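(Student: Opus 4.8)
The plan is to verify that the knot $K$ satisfies the three properties listed at the beginning of Section~\ref{secMont}, so that Theorem~\ref{thmRW} applies. Two of these properties are handled by citing the existing literature, exactly as in the proof of Theorem~\ref{thmgrand}; the third property is the hypothesis of the statement. First I would invoke \cite{M} to rule out lens space surgeries: under the stated hypotheses $K$ is not the $(-2,3,7)$ pretzel knot (since $\{p,q,r\}\neq\{-2,3,7\}$) and, being a three-strand pretzel knot, it is not of the exceptional form $M(x,1/p,1/q)$ with $x$ in the listed set of half-integers. Hence $K$ admits no non-trivial cyclic surgery and therefore no lens space surgery, establishing property~(1).

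Next I would establish property~(2), that $K$ has at most a strong inversion among its symmetries, by appealing to the classification of symmetries of Montesinos knots in \cite{BZ,S}. This is where the arithmetic hypotheses on $p,q,r$ do real work: requiring $|p|,|q|,|r|>1$, excluding the exceptional triples $\{-2,3,5\}$ and $\{-2,3,7\}$, and demanding that exactly two of the parameters be odd and those two be \emph{distinct} are precisely the conditions that force the tangles to be rigid enough that no extra symmetry (such as a cyclic symmetry permuting the tangles, or a free symmetry) can occur. The unequal-odd-parameters condition in particular prevents the symmetry that would interchange two identical rational tangles. Under these hypotheses, the cited classification shows $K$ is strongly invertible with no other symmetries, giving property~(2).

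Property~(3), that $K$ admits no hidden symmetries, is assumed outright in the statement of the theorem, so nothing further is needed there. With properties~(1), (2), and (3) all in hand, Theorem~\ref{thmRW} immediately yields that $S^3\setminus K$ is the only knot complement in its commensurability class, completing the proof.

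I expect the main obstacle to be property~(2): one must check carefully that the precise numerical hypotheses on $(p,q,r)$ line up with the exact statements in \cite{BZ,S} so that the classification forces a strong inversion and nothing more. The subtlety is that the symmetry group of a Montesinos knot depends delicately on the arithmetic of the tangle slopes, and the conditions ``exactly two of $p,q,r$ odd'' and ``those two unequal'' are exactly the hypotheses needed to eliminate the borderline cases where additional symmetries appear; verifying that these hypotheses cover all such cases, rather than citing the classification too loosely, is the step requiring the most care. By contrast, property~(1) is a clean application of \cite{M} once the two explicit exceptions are excluded, and property~(3) is assumed.
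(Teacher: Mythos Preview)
Your proposal is correct and follows essentially the same route as the paper's proof: cite \cite{M} for no lens space surgery, \cite{BZ,S} for the symmetry group being at most a strong inversion, take the absence of hidden symmetries as hypothesis, and apply Theorem~\ref{thmRW}. The only step you omit, which the paper includes, is invoking \cite{K} to confirm that the stated conditions on $p,q,r$ guarantee $K$ is hyperbolic---a prerequisite for Theorem~\ref{thmRW} to apply.
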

\begin{proof}
The conditions on $p,q,r$ ensure that $K$ is a hyperbolic knot \cite{K}
with a strong inversion. By \cite[Theorem~1.3]{BZ} and \cite[Theorem~6.2]{S}, $K$ has no other symmetries. By \cite[Theorem~1.1]{M}, $K$ has no lens space surgery. So, if in addition $K$ has no
hidden symmetries, then by Theorem~\ref{thmRW}, $S^3 \setminus K$ is the unique knot complement in its class.
\end{proof}

For pretzel knots up to ten crossings, we can show the following:
\begin{theorem}
Let $K$ be a $(p,q,r)$ pretzel knot with $p,q,r$ as in Theorem~\ref{thmpqr} and with at most ten crossings.
Then $S^3 \setminus K$ is the only knot complement in its commensurability class.
\end{theorem}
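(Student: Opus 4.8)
The plan is to reduce this finite-crossing statement to Theorem~\ref{thmpqr}, which already establishes that $S^3 \setminus K$ is the unique knot complement in its commensurability class \emph{provided} $K$ has no hidden symmetries. So the entire content here is to verify, for the finitely many $(p,q,r)$ pretzel knots satisfying the hypotheses of Theorem~\ref{thmpqr} and having at most ten crossings, that none of them admits hidden symmetries. First I would enumerate this finite list. The constraints $|p|,|q|,|r|>1$, exactly two of $p,q,r$ odd with those two unequal, and the crossing bound $|p|+|q|+|r| \leq 10$ together leave only a handful of candidate triples (up to the obvious symmetries of the pretzel construction, such as permuting the tangles and simultaneously reversing all signs), and the excluded pairs $\{-2,3,5\}$ and $\{-2,3,7\}$ are removed because they are non-hyperbolic or otherwise special. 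Many of the resulting knots are in fact $(-2,3,n)$ pretzel knots already handled by Theorem~\ref{thmain}.

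Next, for each knot on the list I would appeal to Corollary~\ref{hidden}: a hyperbolic knot with hidden symmetries must have cusp parameter, hence a cusp field, lying in $\Q(i)$ or $\Q(\sqrt{-3})$. Since the cusp field is a subfield of the invariant trace field $k_n$, it suffices to compute $k_n$ for each knot and check that it contains neither $\Q(i)$ nor $\Q(\sqrt{-3})$ as a subfield. In practice this is a finite computation: one identifies each knot in a census, computes its invariant trace field using a computer algebra system (for instance \textsf{SnapPy} together with \textsf{Magma} or \textsf{pari}), and confirms directly that $\sqrt{-1}, \sqrt{-3} \notin k_n$. Equivalently, one can check that the degree of $k_n$ over $\Q$ is odd, or that the minimal polynomial reduces appropriately modulo small primes, exactly as in the discriminant arguments of Sections~\ref{secQi} and~\ref{secQ3}.

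Having confirmed that no knot on the list has $\Q(i)$ or $\Q(\sqrt{-3})$ as a subfield of its trace field, Corollary~\ref{hidden} immediately rules out hidden symmetries for each one. The hypotheses of Theorem~\ref{thmpqr}---hyperbolicity, the strong inversion with no other symmetries, and the absence of lens space surgeries---are then all in force, so Theorem~\ref{thmpqr} applies directly to conclude that $S^3 \setminus K$ is the only knot complement in its commensurability class.

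The main obstacle, and indeed the only real work, is the finite-but-not-entirely-trivial verification that the trace fields avoid the two forbidden quadratic subfields. For the $(-2,3,n)$ members of the list this is already covered by Theorem~\ref{thmain}, so the burden falls on the genuinely new triples; for those, one cannot invoke the recursive polynomial families $p_n$ and $q_n$ and must instead rely on a case-by-case computation. I expect no surprises---the trace fields of small hyperbolic knots almost never contain $\Q(i)$ or $\Q(\sqrt{-3})$---but making the claim rigorous requires actually carrying out the computation for each knot rather than appealing to a general pattern, and care is needed to ensure the enumeration of triples under the crossing bound is complete and correctly accounts for the pretzel symmetries.
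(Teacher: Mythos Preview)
Your strategy is exactly the paper's: reduce to Theorem~\ref{thmpqr}, handle the $(-2,3,n)$ cases via Theorem~\ref{thmain}, and for the remaining knots rule out hidden symmetries by showing the trace field contains neither $\Q(i)$ nor $\Q(\sqrt{-3})$. What you have written, however, is a plan rather than a proof: you neither carry out the enumeration nor the trace-field computations, and you explicitly defer both to unspecified software runs.

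The paper does both steps concretely. The enumeration yields exactly three knots outside the $(-2,3,n)$ family: $(2,3,5)=10_{46}$, $(2,3,-5)=10_{126}$, and $(-3,3,4)=10_{140}$. For each, the paper writes down a presentation of the knot group, parametrises the parabolic $\SLC$-representations as in Equation~(\ref{eqfgh}) (with a minor variant for $(-3,3,4)$), eliminates variables using the first group relation, and obtains from the second relation an explicit irreducible polynomial over $\Q$ of degree $17$, $11$, and $7$ respectively. Since these degrees are odd, the trace field admits no quadratic subfield, and Corollary~\ref{hidden} finishes the argument. So the paper avoids any appeal to \textsf{SnapPy} or census data and instead produces the Riley-type polynomials by hand; the odd-degree observation you mention in passing is in fact the entire mechanism. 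To turn your proposal into a proof you would need to supply the list of three knots and either reproduce these polynomials or give certified computer output to the same effect.
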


\begin{remark}Using the computer software {\em Snap} we can extend this to twelve crossings; according to~\cite{GHH}, none of the pretzel knots of the type described in Theorem~\ref{thmpqr} with twelve or fewer crossings has a hidden symmetry. 
\end{remark}

\begin{proof}
By Theorem~\ref{thmgrand}, the theorem holds if
$K$ is a $(-2,3,n)$ pretzel knot. The only other candidates of ten or fewer crossings are
$(2,3,5)$ ($10_{46}$ in the tables), $(2,3,-5)$ ($10_{126}$), and 
$(-3,3,4)$ ($10_{140}$). We will show that each of these three has no hidden
symmetries by demonstrating that the trace field has no $\Q(i)$ nor $\Q(\sqrt{-3})$ subfield.
Indeed, in each case we will show that the trace field is an odd degree extension of $\Q$ and,
therefore, admits no quadratic subfield.

The $(2,3,5)$ pretzel knot has fundamental group 
$$ \Gamma_{2,3,5} \cong \langle f,g,h \mid hfhfg^{-1}=fhfg^{-1}f, gf^{-1}ghghg=f^{-1}ghghgh \rangle $$
and we can use the same parametrisation of the parabolic 
$\SLC$-representations 
as in Equation~(\ref{eqfgh}). Then,
the lower right entry of $\rho(hfhfg^{-1}) - \rho(fhfg^{-1}f)$ is $v(u(u^2-1) + wv[u^2-1 + uv(u^2-2)])$.
Since $v \neq 0$ ($v=0$ would imply $\rho$ is not faithful), we must have $w = u(1-u^2)/(v[u^2-1 + uv(u^2-2)])$.  On making this substitution,
we see that $\rho$ will satisfy the first relation if $u-1+v(u^2-u-1)=0$ or, equivalently,
$v = (u-1)/( 1+u-u^2)$.  The second relation will then be satisfied if $u$ is a root of the irreducible polynomial
\begin{eqnarray*}
p_{2,3,5} & = & u^{17}-3u^{16}-5u^{15}+18u^{14} + 14u^{13}-41u^{12}-46u^{11} +47u^{10}\\
&& \mbox{ }+104u^9-17u^8-114u^7-40u^6+56u^5+50u^4-8u^3-11u^2-2u+1.
\end{eqnarray*}
Note that for any root of $u$ of $p_{2,3,5}$, $u^2-1 + uv(u^2-2) \neq 0$ where $v =(u-1)/(1+u-u^2)$. Therefore the substitution $w = u(1-u^2)/(v[u^2-1 + uv(u^2-2)])$ is always defined and $p_{2,3,5}$ is indeed the Riley polynomial for the $(2,3,5)$ pretzel knot.   Thus, the discrete faithful representation $\rho_0$ corresponds to a root of $p_{2,3,5}$ and the trace field is a degree 17 extension of $\Q$.

The fundamental group of the $(2,3,-5)$ pretzel knot is 
$$ \Gamma_{2,3,-5} \cong \langle f,g,h \mid hfhfg^{-1}=fhfg^{-1}f, hghghf=ghghfg \rangle $$
so that we can satisfy the first relation using the same substitutions as for the $(2,3,5)$ pretzel knot.
The second relation will also be satisfied provided $u$ is a root of the irreducible polynomial
$$p_{2,3,-5} = u^{11}-3u^{10}-3u^9+12u^8+7u^7-18u^6-19u^5+13u^4+21u^3-u^2-7u+1.$$
So, the degree of the trace field is 11.

For the $(-3,3,4)$ knot we have that
$$ \Gamma_{-3,3,4} \cong \langle f,g,h \mid g^{-1}f^{-1}gfg=h^{-1}f^{-1}hfh, h^{-1}fhfhg^{-1}h=fhg^{-1}hg^{-1}hg \rangle. $$
In this case it's convenient to alter the parametrisation slightly:
$$
\rho(f) = \left(
\begin{array}{cc} 
1-u & -u/v \\
uv & 1+u 
\end{array} \right) , 
\rho(g) = \left(
\begin{array}{cc} 
1   & 0 \\
w^2 & 1 
\end{array} \right) ,
\mbox{ and } \rho(h) = \left(
\begin{array}{cc} 
1 & -1 \\
0 & 1 
\end{array} \right).
$$
The upper left entry of $\rho(g^{-1}f^{-1}gfg)-\rho(h^{-1}f^{-1}hfh)$ is
$$\frac{-u}{v^2} [u(v^4-v^3+vw^2+w^4) - v(v^2+w^2)]$$ which suggests
setting $u = v(v^2+w^2)/(w^4+vw^2-v^3+v^4)$. On making this
substitution, we see that the first relation will be satisfied provided
$v = w(w+1)/(w-1)$. Then, the second relation depends on $w$ satisfying
the irreducible polynomial $$p_{-3,3,4} = w^7-w^6 + 7w^5 -3 w^4 +12 w^3 +2 w^2 +4 w +2$$
so that the trace field is of degree 7 over $\Q$.
\end{proof}
\bibliographystyle{plain} 
\bibliography{MMbib}
		
\end{document}